\newif\ifnotesw\noteswtrue
\ifnotesw\marginpar[\hfill\(\top\)]{\(\top\)}\fi}%
\ifnotesw\marginpar[\hfill\(\bot\)]{\(\bot\)}\fi}
\newcommand{\mnote}[1]%
    {\ifnotesw\marginpar%
        [{\scriptsize\begin{minipage}[t]{\marginparwidth}
        \raggedleft#1%
                        \end{minipage}}]%
        {\scriptsize\begin{minipage}[t]{\marginparwidth}
        \raggedright#1%
                        \end{minipage}}%
    \fi}
\newcommand{\ignore}[1]{}
\newsavebox{\given}
\savebox{\given}[1em]{\rule[-1.5ex]{.2mm}{4ex}}
\newtheorem{theorem}{Theorem}
\newtheorem{corollary}[theorem]{Corollary}
\newtheorem{lemma}[theorem]{Lemma}
\newtheorem{fact}[theorem]{Fact}
\newcommand{\iverson}[1]{\lbrack\!\lbrack #1 \rbrack\!\rbrack}
\newcommand{\zo}{\{0,1\}}
\newcommand{\CC}{\mathbb{C}}
\newcommand{\GG}{\mathcal{G}}
\newcommand{\SC}{\mathcal{C}}
\newcommand{\ket}[1]{\mathbf{e}(#1)}
\newcommand{\bra}[1]{\ket{#1}^{\dagger}}
\newcommand{\braket}[2]{\bra{#1}\ket{#2}}
\newcommand{\ketbra}[2]{\ket{#1}\bra{#2}}
\newcommand{\cart}{\mbox{ $\Box$ }}
\DeclareMathOperator{\Alt}{\mathfrak{A}}
\DeclareMathOperator{\Ker}{ker}
\DeclareMathOperator{\sgn}{sgn}
\newcommand{\wedgep}[2]{\bigwedge^{#2} #1}
\newcommand{\wedgepo}[3]{\bigwedge^{#2}_{#3} #1}
\newcommand{\xwedgep}[2]{\mbox{$\wedgep{#1}{#2}$}}
\newcommand{\xwedgepo}[3]{\mbox{$\wedgepo{#1}{#2}{#3}$}}
\newcommand{\veep}[2]{{#1}^{\{ #2 \}}}
\newcommand{\symdif}{\triangle}
\title{
Which Exterior Powers are Balanced?
}
\author{
Devlin Mallory\footnote{Department of Mathematics, University of California at Berkeley.}
\and
Abigail Raz\footnote{Department of Mathematics, Wellesley College.}
\and
Christino Tamon\footnote{Department of Computer Science, Clarkson University. Contact author: tino@clarkson.edu}
\and
Thomas Zaslavsky\footnote{Department of Mathematics, Binghamton University.}
}
\date{\today}
\begin{document}
\maketitle
\bibliographystyle{plain}

\begin{abstract}
A signed graph is a graph whose edges are given $\pm 1$ weights.
In such a graph, the sign of a cycle is the product of the signs of its edges.
A signed graph is called balanced if its adjacency matrix is similar to
the adjacency matrix of an unsigned graph via conjugation by a diagonal $\pm 1$ matrix.
For a signed graph $\Sigma$ on $n$ vertices, its exterior $k$th power,
where $k=1,\ldots,n-1$, is a graph $\bigwedge^{k} {\Sigma}$ whose adjacency matrix is given by
\[ A(\mbox{$\bigwedge^{k} {\Sigma}$}) = P_{\wedge}^{\dagger} A(\Sigma^{\Box k}) P_{\wedge}, \]
where $P_{\wedge}$ is the projector onto the anti-symmetric subspace of the
$k$-fold tensor product space $(\mathbb{C}^{n})^{\otimes k}$ and $\Sigma^{\Box k}$ is the $k$-fold
Cartesian product of $\Sigma$ with itself.
The exterior power creates a signed graph from any graph, even unsigned.
We prove sufficient and necessary conditions so that $\bigwedge^{k} {\Sigma}$ is balanced.
For $k=1,\ldots,n-2$, the condition is that either $\Sigma$ is a signed path 
or $\Sigma$ is a signed cycle that is balanced for odd $k$ or is unbalanced for even $k$;
for $k=n-1$, the condition is that each even cycle in $\Sigma$ is positive and each odd cycle
in $\Sigma$ is negative.

\vspace{.1in}
\par\noindent{\em Keyword}: signed graphs, exterior powers, quotient graphs.
\end{abstract}


\section{Introduction}

We are interested in graph operators which arise from taking the quotient of a 
Cartesian product of an underlying graph with itself. 
More specifically, such operators are defined on a graph $G=(V,E)$ after applying
the following three steps.
First, we take the $k$-fold Cartesian product of $G$ with itself, namely $G^{\Box k}$.
Note that the vertex set of $G^{\Box k}$ is the set of $k$-tuples $V^{k}$.
For the second (possibly optional) step, we remove from $V^{k}$ (via vertex deletions) 
the set $\mathcal{D}$ consisting of all $k$-tuples of vertices which contain a repeated vertex. 
We denote the resulting graph as $G^{\Box k} \setminus \mathcal{D}$.
Finally, we conjugate the adjacency matrix of $G^{\Box k} \setminus \mathcal{D}$ 
with an invertible operator $\mathcal{Q}$. 
If we call the resulting graph $\GG$, then
\begin{equation} \label{eqn:quotient}
A(\GG) = \mathcal{Q}^{\dagger} A(G^{\Box k} \setminus \mathcal{D}) \mathcal{Q}.
\end{equation}
For cases of interest, $\mathcal{Q}$ will either be the symmetrizer $P_{\vee}$
or the anti-symmetrizer $P_{\wedge}$ which are projection operators onto 
the symmetric and anti-symmetric subspaces of the $k$-fold tensor product space 
$V^{\otimes k}$ (see Bhatia \cite{bhatia}). 

For example, Audenaert, Godsil, Royle and Rudolph \cite{agrr07} studied the 
{\em symmetric powers} $\veep{G}{k}$ of a graph $G$ which is defined as
\begin{equation}
A(\veep{G}{k}) = \tilde{P}^{\dagger} A(G^{\Box k} \setminus \mathcal{D}) \tilde{P},
\end{equation}
where $\tilde{P}$ is a ``hybrid'' of $P_{\vee}$ and $P_{\wedge}$.
These graphs were studied in the context of the graph isomorphism problem on strongly regular graphs. 
They are equivalent to the so-called $k$-tuple vertex graphs, 
introduced by Zhu, Liu, Dick and Alavi \cite{zlla92},
which generalizes the well-known double vertex graphs 
(see Alavi, Behzad, Erd\"{o}s, and Lick \cite{abel91}).
Subsequently, these $k$-tuple vertex graphs were studied by Osborne \cite{o06} 
in the framework of spin networks in quantum information.

The main focus of this work is on the {\em exterior powers} $\wedgep{G}{k}$ of a graph $G$ 
defined as
\begin{equation}
A(\xwedgep{G}{k}) = P_{\wedge}^{\dagger} A(G^{\Box k}) P_{\wedge}.
\end{equation}
We will show that $\wedgep{G}{k}$ is a {\em signed} graph whose edges have $\pm 1$ weights.
Moreover, this also holds if the underlying graph is a signed graph $\Sigma = (G,\sigma)$
where $\sigma$ is the $\pm 1$-valued edge-signing function.

Our main observation is the following characterization of balanced exterior powers.

\begin{theorem} \label{thm:signed-balanced}
Let $\Sigma$ be a signed graph on $n$ vertices. Then, $\wedgep{\Sigma}{k}$ is balanced if and only if:
\begin{itemize}
\item for $k = 2,\ldots,n-2$:\\
	Either (i) $\Sigma$ is a signed path; or 
	(ii) $\Sigma$ is a signed cycle that is balanced when $k$ is odd $k$ and unbalanced when $k$ is even.
\item for $k=n-1$:\\
	Each even cycle in $\Sigma$ is positive and each odd cycle is negative.
\end{itemize}
\end{theorem}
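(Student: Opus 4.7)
The proof rests on an explicit formula for edge signs in the exterior power. Direct expansion of the antisymmetrization shows that the edge $S \to T$ of $\bigwedge^{k} \Sigma$, where $T = (S \setminus \{a\}) \cup \{b\}$ and $a,b$ occupy sorted positions $i, j$ in $S, T$ respectively, carries sign $(-1)^{i+j}\sigma(a,b)$. I will use this throughout. As a first consequence, if $\Sigma$ is a signed path, then $\Sigma$ is balanced (vacuously), so switching reduces to the case $\sigma \equiv +1$; every permitted swap $i \leftrightarrow i{+}1$ in $\bigwedge^{k} P_n$ leaves the sorted position unchanged (so $i = j$), yielding edge sign $+1$. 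Hence $\bigwedge^{k} \Sigma$ has all edges positive and is balanced.

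For a signed cycle $\Sigma = C_n$ with vertices $1, \ldots, n$ in cyclic order, I switch so each local edge $\{i, i{+}1\}$ has $\sigma = +1$ and the wrap edge $\{1, n\}$ carries all of the imbalance, $\sigma(1, n) = \sigma(C_n)$. A wrap swap moves its element between positions $1$ and $k$, giving wrap-edge sign $(-1)^{k+1} \sigma(C_n)$ in $\bigwedge^{k} \Sigma$, while local edges remain $+1$. If $\sigma(C_n) = (-1)^{k+1}$ (i.e., $\sigma(C_n) = +1$ for $k$ odd and $\sigma(C_n) = -1$ for $k$ even), every edge of $\bigwedge^{k} \Sigma$ is positive, so it is balanced. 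Conversely, if $\sigma(C_n) \neq (-1)^{k+1}$, I exhibit a length-$n$ cycle in $\bigwedge^{k} C_n$ that uses the wrap exactly once and each local edge exactly once---for $k = 2$ the prototype is $\{1,2\} \to \{2,n\} \to \{1,n\} \to \{1,n{-}1\} \to \cdots \to \{1,2\}$, and an analogous rotation by $n$ steps works for general $k \leq n-2$---whose sign is $(-1)^{k+1}\sigma(C_n) \neq +1$, so $\bigwedge^{k} \Sigma$ is unbalanced.

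For necessity when $\Sigma$ is not a signed path or cycle, I show it must contain a vertex $v$ of degree at least $3$ with neighbors $a, b, c$, and for any $(k{-}2)$-subset $B \subseteq V \setminus \{v, a, b, c\}$ (which exists since $k \leq n-2$) I use the 6-cycle on the $k$-subsets $B \cup \{v, a\},\, B \cup \{a, b\},\, B \cup \{v, b\},\, B \cup \{b, c\},\, B \cup \{v, c\},\, B \cup \{a, c\}$ in $\bigwedge^{k} \Sigma$. Each of the edges $va, vb, vc$ is traversed twice, so the $\sigma$ factors cancel, and the sum of the six position-shift exponents reduces---after the $B$-dependent contributions telescope to zero---to $\mathrm{sgn}(a-b) + \mathrm{sgn}(b-c) + \mathrm{sgn}(c-a)$, which lies in $\{+1, -1\}$ for any ordering of $v, a, b, c$. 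Hence the 6-cycle sign is $(-1)^{\pm 1} = -1$ and $\bigwedge^{k} \Sigma$ is not balanced.

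The case $k = n-1$ is a direct calculation: the vertex set of $\bigwedge^{n-1}\Sigma$ is $\{V \setminus \{u\} : u \in V\}$, with $V \setminus \{u\} \sim V \setminus \{v\}$ iff $uv \in E(\Sigma)$, and the edge-sign formula (labeling $V = [n]$) gives sign $-(-1)^{u+v}\sigma(u,v)$. Switching by $D_{uu} = (-1)^{u}$ kills the $(-1)^{u+v}$ factor and leaves the signed graph $-\Sigma$ (all signs negated). So $\bigwedge^{n-1}\Sigma$ is balanced iff every cycle $C$ of length $\ell$ in $\Sigma$ satisfies $\sigma(C)(-1)^{\ell} = +1$, i.e., $\sigma(C) = (-1)^{\ell}$: even cycles positive, odd cycles negative. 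The main technical obstacle is the position-shift bookkeeping for the 6-cycle, which demands the case analysis above to confirm its sign is always $-1$; by comparison, the parity condition in the cycle case follows cleanly once the wrap-edge sign is identified, and the $k = n-1$ reduction to $-\Sigma$ is a short computation.
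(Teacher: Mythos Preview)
Your proof is correct and follows essentially the same architecture as the paper's: the claw $K_{1,3}$ obstruction via a negative $6$-cycle (the paper's Lemma~\ref{lemma:claw-free}), the parity analysis for signed cycles and paths (Lemmas~\ref{lemma:signed-cycle} and~\ref{lemma:signed-path}), and the separate treatment of $k=n-1$ (Lemma~\ref{lemma:signed-bipartite}). The paper phrases edge signs as $\sgn(\pi)\,\sigma(u_{\pi(i)},v_i)$ for the connecting permutation $\pi$, and uses Fact~\ref{fact:ordering-invariant} together with a preliminary switching (making the claw all-positive, or pushing all imbalance onto one cycle edge) to avoid the position bookkeeping you carry out directly with the $(-1)^{i+j}\sigma(a,b)$ formula; the two are equivalent. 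Your $k=n-1$ argument is in fact a bit slicker than the paper's: you observe once and for all that the diagonal switching $D_{uu}=(-1)^u$ identifies $\bigwedge^{n-1}\Sigma$ with $-\Sigma$, whereas the paper argues cycle-by-cycle. One small point: your ``analogous rotation by $n$ steps'' for general $k$ in the signed-cycle converse is asserted rather than constructed; the paper is equally brief here, and the gap is easily closed (for instance by noting that the local-edge subgraph $\bigwedge^{k}P_n$ is connected, so no nontrivial switching can kill a single negative wrap edge).
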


The proof of the first part of the above theorem is based on a forbidden subgraph property.
Namely, if the exterior $k$th power $\wedgep{\Sigma}{k}$ is balanced, then 
$\Sigma$ may not contain the claw $K_{1,3}$ as a subgraph, for $k = 1,\ldots,n-2$. 
In this case, $\Sigma$ is either a signed path or it is a signed cycle which 
is balanced for odd $k$ and unbalanced for even $k$. 
The case when $k = n-1$ requires a separate argument.

As a corollary, we also obtain a characterization of {\em unsigned} graphs $G$ 
whose exterior $k$th power is balanced. For $2 \le k \le n-2$, $\wedgep{G}{k}$ is balanced 
if and only if $G$ is a path or $G$ is a cycle {\em and} $k$ is odd. For the boundary
case of $k=n-1$, $\wedgep{G}{n-1}$ is balanced if and only if $G$ is bipartite.

We remark that both the symmetric and exterior powers are closely related to
the study of many-particle quantum walk on graphs in quantum information (see \cite{bgmrt}).


\section{Preliminaries}

We state some notation which we use in the remainder of this paper.
For a logical statement $S$, we let $\iverson{S}$ be $1$ if $S$ is true, and $0$ otherwise.
Given a positive integer $n$, we use $[n]$ to denote the set $\{1,2,\ldots,n\}$.
For two sets $A$ and $B$, we let $A \symdif B$ denote the symmetric difference of $A$ and $B$;
that is, $A \symdif B = (A \setminus B) \cup (B \setminus A)$.

The graphs $G$ we study here are finite, mostly simple, undirected and connected. 
The vertex set of $G$ will be denoted $V(G)$ and its edge set $E(G)$.
The adjacency matrix $A(G)$ of $G$ is defined as $A(G)_{u,v} = \iverson{(u,v) \in E(G)}$.
For two graphs $G$ and $H$ with adjacency matrices $A(G)$ and $A(H)$, respectively,
their {\em Cartesian product} $G \cart H$ is a graph defined on the vertex set 
$V(G) \times V(H)$ where $(g_{1},h_{1})$ is adjacent to $(g_{2},h_{2})$ if either
$g_{1}=g_{2}$ and $(h_{1},h_{2}) \in E_{H}$, or $(g_{1},g_{2}) \in E_{G}$ and $h_{1}=h_{2}$.
The adjacency matrix of this Cartesian product is given by
$A(G \cart H) = A(G) \otimes I + I \otimes A(H)$.
Here, $A \otimes B$ denotes the tensor product of matrices $A$ and $B$.

Standard graphs we consider include 
the complete graphs $K_{n}$, paths $P_{n}$, cycles $C_{n}$, bipartite graphs, and the hypercubes $Q_{n}$.

A vertex partition $\pi$ of a graph $G=(V,E)$ given by $V = \biguplus_{j=1}^{m} V_{j}$ is
{\em equitable} if each vertex $u$ in $V_{j}$ is adjacent to $d_{j,k}$ vertices in $V_{k}$,
where this constant is independent of the choice of vertex $u$ (see Godsil \cite{godsil-dm11}).
Each component $V_{j}$ is called a {\em cell} of the equitable partition $\pi$.
Here, $|\pi| = m$ is the size of $\pi$ -- which is the number of cells in $\pi$.
The (normalized) partition matrix $Q$ of $\pi$ is a $|V| \times |\pi|$ matrix whose entries 
are $Q[i,j] = \iverson{i \in V_{j}}/\sqrt{|V_{j}|}$. 
The {\em quotient} graph $G / \pi$ of $G$ modulo the equitable partition $\pi$ is an
undirected weighted graph whose vertices are the cells of $\pi$ and whose edges $(V_{j},V_{k})$ 
are given the weights $\sqrt{d_{j,k}d_{k,j}}$.

\begin{fact} (Godsil \cite{godsil-dm11}) \label{fact:equitable-partition}
Let $G=(V,E)$ be a graph and let $\pi$ be an equitable partition with partition matrix $Q$.
The following properties hold:
\begin{enumerate}
\item $Q^{T} Q = I$.
\item $Q Q^{T}$ commutes with $A(G)$.
\item $A(G / \pi) = Q^{T} A(G) Q$.
\end{enumerate}
\end{fact}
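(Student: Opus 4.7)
The plan is to verify each of the three items by a direct entry-wise computation, using only the definition of the normalized partition matrix $Q$ together with the fundamental double-counting identity $|V_j|\, d_{j,k} = |V_k|\, d_{k,j}$, which counts the edges of $G$ between $V_j$ and $V_k$ in two ways.

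First I would dispatch part (1). Expanding $(Q^{T}Q)_{j,k} = \sum_{i} Q_{i,j}\, Q_{i,k}$ and noting that the supports of the $j$-th and $k$-th columns of $Q$ are the cells $V_j$ and $V_k$, which are disjoint when $j \ne k$, the off-diagonal entries vanish; the diagonal entry equals $|V_j| \cdot (1/\sqrt{|V_j|})^{2} = 1$.

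Next, for part (3), I would compute directly
\[
(Q^{T} A Q)_{j,k} = \frac{1}{\sqrt{|V_j|\,|V_k|}} \sum_{i \in V_j,\, i' \in V_k} A_{i,i'},
\]
where the inner sum is the number of edges between $V_j$ and $V_k$. By the double-counting identity this number equals both $|V_j|\, d_{j,k}$ and $|V_k|\, d_{k,j}$, hence also their geometric mean $\sqrt{|V_j|\, |V_k|\, d_{j,k}\, d_{k,j}}$, so the entry reduces to $\sqrt{d_{j,k}\, d_{k,j}}$, matching the prescribed weight on the edge $(V_j, V_k)$ of $G/\pi$.

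The main content is part (2), where I must prove that $A$ commutes with the orthogonal projector $R = QQ^{T}$. The key structural observation is that $R$ has a simple block form: $R_{u,u'} = 1/|V_c|$ when $u$ and $u'$ lie in the same cell $V_c$, and $R_{u,u'} = 0$ otherwise. Fixing $u \in V_a$ and $u' \in V_b$, I would compute $(AR)_{u,u'} = d_{a,b}/|V_b|$, because this entry averages $A_{u, i'}$ over $i' \in V_b$ and, by equitability, $u$ has exactly $d_{a,b}$ neighbors in $V_b$. Symmetrically, $(RA)_{u,u'} = d_{b,a}/|V_a|$. Equality of these two expressions is precisely the double-counting identity, so $AR$ and $RA$ agree entry-by-entry. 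The only real obstacle is this last step of carefully deriving the entry formulas, since $AR$ and $RA$ look asymmetric in the pair $(a,b)$ until the identity is invoked; the equitability hypothesis is used exactly here, ensuring that the neighbor counts $d_{a,b}$ and $d_{b,a}$ are well-defined independently of the representative vertex chosen in each cell.
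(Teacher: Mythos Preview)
Your proof is correct: each of the three items follows by the entry-wise computations you describe, and the double-counting identity $|V_j|\,d_{j,k}=|V_k|\,d_{k,j}$ is exactly what is needed to close parts (2) and (3). The paper itself does not supply a proof of this fact---it is quoted from Godsil \cite{godsil-dm11} as background---so there is no in-paper argument to compare against; your direct verification is the standard one.
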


\par\noindent
More background on algebraic graph theory may be found in Godsil and Royle \cite{godsil-royle01}.

\subsection{Signed graphs}
A {\em signed} graph $\Sigma = (G,\sigma)$ is a pair consisting of a graph $G=(V,E)$ and
a signing map $\sigma: E(G) \rightarrow \{-1,+1\}$ over the edges of $G$.
We call $G$ the underlying (unsigned) graph of $\Sigma$; 
we also use $|\Sigma|$ to denote this underlying graph.

The sign of a cycle in a signed graph is the product of all the signs of its edges. 
A signed graph is called {\em balanced} if all of its cycles are positive.
Similarly, it is called {\em anti-balanced} if all of its odd cycles are negative
and all of its even cycles are positive.

\begin{fact} (Harary \cite{h53})
A signed graph is balanced if and only if 
there is a bipartition in which all crossing edges are negative 
and all non-crossing edges are positive.
\end{fact}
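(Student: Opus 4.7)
The plan is to prove both directions, with the converse carrying the substantive content. For the easy direction, suppose $V = V_1 \uplus V_2$ is a bipartition in which every crossing edge is negative and every non-crossing edge is positive. Any cycle returns to its starting vertex and therefore crosses between $V_1$ and $V_2$ an even number of times, so its sign is $(-1)$ raised to an even power, namely $+1$; hence every cycle is positive and $\Sigma$ is balanced.

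For the converse, assume $\Sigma$ is balanced. Treating connected components separately, I may assume the underlying graph $|\Sigma|$ is connected. I would then fix a spanning tree $T$ and a root $v_0$, and for each vertex $v$ define $s(v) \in \{-1,+1\}$ to be the product of the edge signs along the unique tree path $T_v$ from $v_0$ to $v$ (with $s(v_0) = 1$). The candidate bipartition is $V_+ := s^{-1}(+1)$ and $V_- := s^{-1}(-1)$, and the one identity to verify is that $\sigma(u,w) = s(u)\,s(w)$ for every edge $(u,w)$: such an identity makes an edge negative exactly when its endpoints lie in different parts.

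For a tree edge $(u,w)$ the claim is immediate from the recursion $s(w) = s(u)\,\sigma(u,w)$ together with $s(u)^2 = 1$. For a non-tree edge $(u,w)$, the fundamental cycle consisting of $(u,w)$ and the tree path $T_{u,w}$ from $u$ to $w$ is positive by balance, so $\sigma(u,w) = \sigma(T_{u,w})$. Viewing paths as edge sets, the shared portion of $T_u$ and $T_w$ running from $v_0$ to the lowest common ancestor of $u$ and $w$ consists of edges appearing in both, whose signs square to $+1$; thus $\sigma(T_u)\,\sigma(T_w) = \sigma(T_u \symdif T_w) = \sigma(T_{u,w})$, yielding $s(u)\,s(w) = \sigma(T_{u,w}) = \sigma(u,w)$ as required.

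The main technical point is the path-cancellation identity $\sigma(T_{u,w}) = s(u)\,s(w)$; once this is in hand, both tree and non-tree edges are handled uniformly and the bipartition is read off from $s$. I do not anticipate a deeper obstacle: the balance hypothesis enters only through positivity of the fundamental cycles, and disconnectedness is dispatched component by component.
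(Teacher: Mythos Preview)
The paper does not actually prove this statement; it is recorded as a known fact with a citation to Harary \cite{h53} and no argument is given. Your proof is correct and is the standard spanning-tree argument for Harary's balance theorem: the switching function $s$ you construct from tree paths is exactly the potential function whose level sets give the bipartition, and the balance hypothesis is used precisely to handle the non-tree edges via positivity of fundamental cycles. There is nothing in the paper to compare against.
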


Given a signed graph $\Sigma$, {\em switching} around a vertex $u \in V(\Sigma)$ means
flipping the signs of all edges adjacent to $u$. Similarly, switching around a subset 
$U \subseteq V(\Sigma)$ means flipping the signs of all edges crossing the cut $(U,U^{c})$,
that is, edges of the form $(v,w)$, where $v \in U$ and $w \in U^{c}$.

\begin{fact} 
A signed graph is balanced (respectively, anti-balanced) if and only if 
there is a subset $U$ for which switching around $U$ results in all edges being positive 
(respectively, negative). 
\end{fact}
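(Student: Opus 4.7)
The plan is to derive this fact from the preceding Harary characterization together with one elementary observation: switching around a vertex subset $U$ preserves the sign of every cycle. This is because any cycle $C$ traverses the cut $(U,U^c)$ an even number of times, so an even number of edges in $C$ have their signs flipped, leaving the product of edge signs along $C$ unchanged.

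For the balanced direction, I would argue both implications separately. Suppose first that $\Sigma$ is balanced. By the preceding fact (Harary), there is a bipartition $V(\Sigma) = V_1 \sqcup V_2$ in which the crossing edges are exactly the negative edges. Taking $U = V_1$, switching around $U$ flips precisely the crossing edges, so every edge becomes positive. Conversely, if switching around some $U$ makes every edge positive, then every cycle in the switched graph is positive; by the invariance observation, every cycle was already positive in $\Sigma$, so $\Sigma$ is balanced.

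For the anti-balanced direction, the cleanest route is reduction to the balanced case via negation. Let $-\Sigma$ denote the signed graph obtained by flipping every edge sign of $\Sigma$. Negation flips the sign of a cycle $C$ by $(-1)^{|C|}$, so the odd cycles of $\Sigma$ are negative and the even cycles positive if and only if all cycles of $-\Sigma$ are positive; that is, $\Sigma$ is anti-balanced iff $-\Sigma$ is balanced. Moreover, switching around $U$ in $\Sigma$ yields the all-negative signing iff switching around $U$ in $-\Sigma$ yields the all-positive signing. Applying the balanced equivalence already proved to $-\Sigma$ then gives the anti-balanced equivalence for $\Sigma$.

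There is no real obstacle in this proof; the only subtlety worth highlighting explicitly is the even-parity-of-cut-crossings lemma for cycles, which makes cycle signs a switching invariant and underlies both directions.
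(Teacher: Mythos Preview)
Your argument is correct: the cycle-sign invariance under switching (via even cut-crossing parity) is exactly the right engine, and the reduction of the anti-balanced case to the balanced case by passing to $-\Sigma$ is clean and valid.

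There is nothing to compare against, however: the paper states this as a \emph{Fact} with no proof, treating it as standard background from signed graph theory (see the reference to Zaslavsky \cite{z82}). So your proposal supplies an argument where the paper gives none. If anything, your write-up is a bit more detailed than necessary for a preliminaries section; one could compress it to the single observation that switching preserves cycle signs and then invoke Harary's bipartition directly.
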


Switching has a nice algebraic description as conjugation by a diagonal matrix with $\pm 1$ entries.
Two signed graphs $\Sigma_{1}$ and $\Sigma_{2}$ are called {\em switching equivalent}, 
denoted $\Sigma_{1} \sim \Sigma_{2}$, if $A(\Sigma_{1}) = D^{-1}A(\Sigma_{2})D$ for some diagonal
matrix $D$ with $\pm 1$ entries.
Thus, a signed graph $\Sigma$ is {\em balanced} if $\Sigma \sim |\Sigma|$
and it is {\em anti-balanced} if $-\Sigma \sim |\Sigma|$ (that is, $-\Sigma$ is balanced).

More basic facts about signed graphs may be found in Zaslavsky \cite{z82}.


\section{Exterior Powers}

Let $G$ be a graph on $n$ vertices.
We first define the exterior $k$th power $\wedgep{G}{k}$, for $1 \le k \le n-1$, in 
a combinatorial fashion. An equivalent algebraic formulation will follow thereafter.
Our treatment follows closely the description given by Osborne \cite{o06} (see also \cite{agrr07}).

We fix an arbitrary total ordering $\prec$ on the vertex set $V$ of $G$.
Consider a $k$-tuple $u \in V^{k}$, say $u = (u_{1},\ldots,u_{k})$, whose elements 
are distinct and are ordered with respect to $\prec$, that is, $u_{1} \prec \ldots \prec u_{k}$.
We adopt the somewhat standard ``wedge'' product notation to denote such an ordered $k$-tuple
as $u = u_{1} \wedge \ldots \wedge u_{k}$.
Let $\binom{V}{k}$ denote the set of all such ordered $k$-tuples. That is:
\begin{equation}
\binom{V}{k} = \left\{u_{1} \wedge \ldots \wedge u_{k} : 
	u_{1},\ldots,u_{k} \in V, u_{1} \prec \ldots \prec u_{k}\right\}.
\end{equation}
Again, we stress that $u_{1} \wedge u_{2} \wedge \ldots \wedge u_{k}$ is notation 
for the $k$-tuple $(u_{1},\ldots,u_{k})$ whose elements are distinct and are given
in ascending order (with respect to $\prec$).

The vertex set of $\wedgep{G}{k}$ is $\binom{V}{k}$ and its edges are defined so
two vertices $u = \bigwedge_{j=1}^{k} u_{j}$ and $v = \bigwedge_{j=1}^{k} v_{j}$ 
are adjacent if there is a bijection $\pi$ over $[k]$ where $u_{\pi(j)} = v_{j}$ 
for all $j \in \{1,\ldots,k\}$ except at one index $i$ where $(u_{\pi(i)},v_{i}) \in E(G)$. 
Thus, for $u,v \in \binom{V}{k}$, we have
\begin{equation} \label{eqn:wedge-adjacency}
(u,v) \in E(\xwedgep{G}{k})
\ \ \
\mbox{ iff } 
\ \ \ 
(\exists i)[(u_{\pi(i)},v_{i}) \in E(G) \mbox{ and } (\forall j \neq i)[u_{\pi(j)} = v_{j}]]. 
\end{equation}
Here, we say that the permutation $\pi$ {\em connects} the vertices $u$ and $v$.
We make $\wedgep{G}{k}$ into a {\em signed} graph by assigning the value $\sgn(\pi)$ to the above edge $(u,v)$.

\vspace{.2in}
\par\noindent{\bf Example}: 
Consider a $4$-cycle on $V = \{a,b,c,d\}$ with edges $E = \{(a,b),(b,c),(c,d),(a,d)\}$. 
With the ordering $a \prec b \prec c \prec d$, we have
$\binom{V}{2} = \{a \wedge b, a \wedge c, a \wedge d, b \wedge c, b \wedge d, c \wedge d\}$.
Then, $\wedgep{C_{4}}{2}$ is the signed bipartite graph $K_{2,4}$ with partition 
$\{a \wedge d, b \wedge c\}$ and $\{a \wedge b, a \wedge c, b \wedge d, c \wedge d\}$.
All edges are positive except for the ones connecting $b \wedge c$ with both $a \wedge b$ and $c \wedge d$.
See Figure \ref{fig:ext-c4}.
\vspace{.2in}

Our further interest will be on the exterior power $\wedgep{\Sigma}{k}$ of a signed graph $\Sigma = (G,\sigma)$.
This is a strict generalization of the case for unsigned graphs.
In this general case, the sign of the edge connecting $u,v$ in $\wedgep{\Sigma}{k}$ is 
the product of the sign of the permutation $\pi$ that connects $u$ and $v$, 
and the sign of the only adjacent pair $(u_{\pi(i)}, v_{i})$ in $\Sigma$, for $i$
which satisfies Equation (\ref{eqn:wedge-adjacency}); that is,
the sign of the edge $(u,v)$ in $\wedgep{\Sigma}{k}$ is
\begin{equation}
\sigma'(u,v) = \sgn(\pi) \times \sigma(u_{\pi(i)},v_{i}),
\end{equation}
where $(u_{\pi(i)},v_{i}) \in E(G)$ and $u_{\pi(j)} = v_{j}$ for all $j \neq i$.
Here, we have used $\sigma'$ to denote the signing function for the exterior power $\wedgep{\Sigma}{k}$.

\vspace{.1in}
\par\noindent{\bf Remark}:
A {\em gain} graph $\Phi = (G,X,\varphi)$ is a triple consisting of 
a graph $G=(V,E)$, a group $X$, and a gain function $\varphi$ over the edges of $G$,
that is, $\varphi: E(G) \rightarrow X$.
In our case, we may treat $\wedgep{G}{k}$ as a gain graph $\Phi(\wedgep{G}{k},S_{k},\varphi)$ 
over the symmetric group $S_{k}$, where $\varphi$ assigns the permutation $\pi$ that connects
$u$ and $v$ to the edge $(u,v)$.

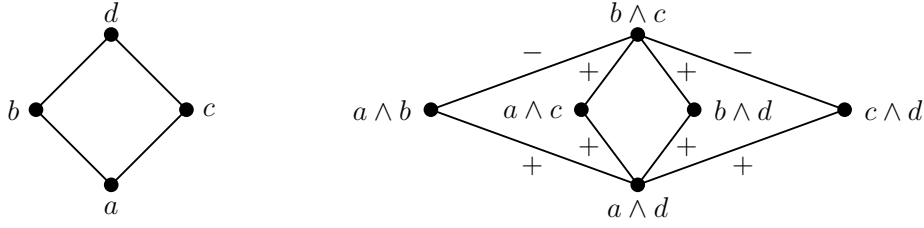
\begin{figure}[t]
\begin{center}
\begin{tikzpicture}

%
\draw[line width=0.25mm] (0,-1)--(-1,0)--(0,1)--(1,0)--(0,-1);

\foreach \y in {-1,1}
    \node at (0,\y)[circle, fill=black][scale=0.5]{};
\foreach \x in {-1,1}
    \node at (\x,0)[circle, fill=black][scale=0.5]{};

\node at (0,-1.3)[scale=0.9]{$a$};
\node at (-1.3,0)[scale=0.9]{$b$};
\node at (0,+1.3)[scale=0.9]{$d$};
\node at (+1.3,0)[scale=0.9]{$c$};

%
\foreach \y in {-1,+1}
	\foreach \x in {4.25,6.25,7.75,9.75}
		\draw[line width=0.25mm] (7,\y)--(\x,0);
\foreach \x in {4.25,6.25,7.75,9.75}
    \node at (\x,0)[circle, fill=black][scale=0.5]{};
\node at (3.6,0)[scale=0.9]{$a \wedge b$};
\node at (5.6,0)[scale=0.9]{$a \wedge c$};
\node at (8.4,0)[scale=0.9]{$b \wedge d$};
\node at (10.4,0)[scale=0.9]{$c \wedge d$};
   
\foreach \y in {-1,1}
	\node at (7,\y)[circle, fill=black][scale=0.5]{};
\node at (7,+1.3)[scale=0.9]{$b \wedge c$};
\node at (7,-1.3)[scale=0.9]{$a \wedge d$};

\node at (5.6,+0.75)[scale=0.9]{$-$};
\node at (5.6,-0.75)[scale=0.9]{$+$};
\node at (8.4,+0.75)[scale=0.9]{$-$};
\node at (8.4,-0.75)[scale=0.9]{$+$};

\node at (6.35,+0.5)[scale=0.9]{$+$};
\node at (6.35,-0.5)[scale=0.9]{$+$};
\node at (7.65,+0.5)[scale=0.9]{$+$};
\node at (7.65,-0.5)[scale=0.9]{$+$};
\end{tikzpicture}
\caption{The $4$-cycle and its exterior square $\wedgep{C_{4}}{2}$.
}
\label{fig:ext-c4}
\end{center}
\hrule
\end{figure}

\paragraph{Algebraic framework}
Next, we describe an algebraic framework for the exterior $k$th power $\wedgep{G}{k}$.
Much of the machinery below will be familiar from multilinear algebra (see Bhatia \cite{bhatia}).

Let $G=(V,E)$ be a graph on $n$ vertices and let $k$ be a positive integer where $1 \le k \le n-1$.
We consider the vector space $W = \CC^{V}$ of dimension $n$ where we use the vertex set $V$ to 
index the $n$-dimensional space. 
\ignore{
\vspace{.1in}
\par\noindent{\bf Notation Alert}:\\
In what follows, we adopt the Dirac ``bra-ket'' notation which is convenient for our purposes 
(see \cite{nc00}). 
Using this notation, column vectors $\vec{z}$ in $\CC^{V}$ are written
as ``ket'' vectors $\ket{z}$ while row vectors $\vec{z}^{\dagger}$ are written as ``bra'' vectors 
$\bra{z}$ (with the understanding that $\bra{z} = \ket{z}^{\dagger}$). 
The notation supports the view that row vectors are linear functionals,
since $\bra{z}(\ket{w}) = \braket{z}{w}$ is simply the inner product of $\vec{z}$ and $\vec{w}$.
Similarly, $\ketbra{z}{w}$ is the outer product of $\vec{z}$ and $\vec{w}$.
\vspace{.1in}
}
For each vertex $u \in V$, we let $\ket{u}$ represent the unit vector that is $1$ at position $u$ 
and $0$ elsewhere. So, the vertex set of $G$ gives us a basis for the vector space $W$ 
via the set $\{\ket{u} : u \in V\}$.
Next, we consider the $k$-fold tensor product space $\bigotimes^{k} W$ and index this 
$n^{k}$-dimensional space using the $k$-tuples of $V^{k}$. 
To this end, for a $k$-tuple $u = (u_{1},\ldots,u_{k}) \in V^{k}$, 
we define the unit vector $\ket{u}$ (this time in $\bigotimes^{k} W$) as
\begin{equation}
\ket{u}
=
\ket{u_{1}} \otimes \ket{u_{2}} \otimes \ldots \otimes \ket{u_{k}}.
\end{equation}
Moreover, the $k$-fold tensor product space $\bigotimes^{k} W$ is spanned by all such unit vectors.
For a permutation $\pi \in S_{k}$ and a $k$-tuple $u = (u_{1},\ldots,u_{k})$, 
the action of $\pi$ on $u$ is defined as $\pi(u) = (u_{\pi(1)},\ldots,u_{\pi(k)})$.
In a natural way, we may extend this to define the action of $\pi$ on the unit vector $\ket{u}$ as
\begin{equation}
\pi(\ket{u}) 
=
\ket{\pi(u)} 
= 
\ket{u_{\pi(1)}} \otimes \ket{u_{\pi(2)}} \otimes \ldots \otimes \ket{u_{\pi(k)}}.
\end{equation}
As before, we fix an arbitrary total ordering on the vertex set $V$, 
say $v_{1} \prec v_{2} \prec \ldots \prec v_{n}$. 
Then, we consider the set of ordered $k$-tuple of {\em distinct} elements of $V$, which is usually written
as ``wedge'' products:
\begin{equation}
\binom{V}{k} = \left\{ u_{1} \wedge u_{2} \wedge \ldots \wedge u_{k} : u_{1} \prec u_{2} \prec \ldots \prec u_{k}, 
					\mbox{ where } u_{1},\ldots,u_{k} \in V \right\}.
\end{equation}
Here, $u_{1} \wedge u_{2} \wedge \ldots \wedge u_{k}$ is notation for the $k$-tuple $(u_{1},u_{2},\ldots,u_{k})$
where $u_{1} \prec u_{2} \prec \ldots \prec u_{k}$.

The exterior power $\bigwedge^{k} W$ is a vector subspace of $\bigotimes^{k} W$
and we will use $\binom{V}{k}$ to index this $\binom{n}{k}$-dimensional space. 
Thus, we may identify $\bigwedge^{k} W$ with $\CC^{\binom{V}{k}}$ in the same way 
we identify $\bigotimes^{k} W$ with $\CC^{V^{k}}$.
The exterior power $\bigwedge^{k} W$ is spanned by the ``wedge'' vectors
\begin{equation}
\ket{u_{1} \wedge u_{2} \wedge \ldots \wedge u_{k}} 
=
\iverson{u_{1} \prec u_{2} \prec \ldots \prec u_{k}} \
\ket{u_{1}} \otimes \ket{u_{2}} \otimes \ldots \otimes \ket{u_{k}}.
\end{equation}
A nice way to view the connection between $\bigotimes^{k} W$ and $\bigwedge^{k} W$
is via the {\em anti-symmetrizer} $\Alt_{n,k}$ defined as the following $n^{k} \times \binom{n}{k}$ matrix:
\begin{equation}
\Alt_{n,k} 
= 
\frac{1}{\sqrt{k!}} \sum_{v \in \binom{V}{k}, \pi \in S_{k}} \sgn(\pi) \ketbra{\pi(v)}{v}.
\end{equation}
\ignore{
This shows that $\Alt_{n,k}$ is an injective map from $\bigwedge^{k} W$ into $\bigotimes^{k} W$ with
\begin{equation}
\Alt_{n,k}: \ket{v} \mapsto \frac{1}{\sqrt{k!}} \sum_{\pi} \sgn(\pi) \ket{\pi(v)}.
\end{equation}
Moreover, it also shows that $\Alt_{n,k}^{T}$ is a surjective map from $\bigotimes^{k} W$ to $\bigwedge^{k} W$ with
\begin{equation}
\Alt_{n,k}^{T}: \ket{u} \mapsto \frac{\sgn(\pi)}{\sqrt{k!}} \ket{v},
\end{equation}
where $v \in \binom{V}{k}$ and $\pi(u) = v$.
}

\vspace{.1in}
In what follows, we show the role of $\Alt_{n,k}$ in defining the exterior $k$th power $\wedgep{G}{k}$,
for a graph $G$. As a consequence, we note that $\wedgep{G}{k}$ is a signed quotient of the $k$-fold 
Cartesian product of $G$ modulo a natural equitable partition.

\begin{lemma} \label{lemma:exterior} 
Let $G$ be a graph on $n$ vertices and $k$ be an integer where $1 \le k \le n-1$.
The following properties hold:
\begin{enumerate}
\item $\Alt_{n,k}^{T}\Alt_{n,k} = I_{\binom{n}{k}}$.
\item $\Alt_{n,k} \Alt_{n,k}^{T}$ commutes with $A(G^{\Box k})$.
\item $A(\wedgep{G}{k}) = \Alt_{n,k}^{T} A(G^{\Box k}) \Alt_{n,k}$.
\end{enumerate}
\end{lemma}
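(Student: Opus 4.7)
My approach is to verify the three statements by direct expansion of the matrix $\Alt_{n,k}$; the three parts parallel Fact~\ref{fact:equitable-partition} for equitable partitions, with $\Alt_{n,k}$ playing the role of the partition matrix $Q$ and $\Alt_{n,k}\Alt_{n,k}^{T}$ playing the role of the projector $P_{\wedge}$ from the introduction. For part (1), expanding the product yields
\[
\Alt_{n,k}^{T}\Alt_{n,k}
= \frac{1}{k!}\sum_{v,v'\in\binom{V}{k}}\sum_{\pi,\pi'\in S_{k}}
\sgn(\pi)\sgn(\pi')\,\braket{\pi(v)}{\pi'(v')}\,\ketbra{v}{v'}.
\]
Since both $v$ and $v'$ have distinct entries in the fixed order $\prec$, the identity $\pi(v)=\pi'(v')$ as tuples forces $v=v'$ (same underlying set, same ordering) and then $\pi=\pi'$ (the $S_{k}$-action on an injective tuple is free). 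The signs square to one and $k!$ cancels, leaving $\sum_{v}\ketbra{v}{v}=I_{\binom{n}{k}}$.

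For part (2), the main step is to recognize $\Alt_{n,k}\Alt_{n,k}^{T}$ as the standard anti-symmetrizer $P_{\wedge}=\frac{1}{k!}\sum_{\tau\in S_{k}}\sgn(\tau)\,\tau$ acting on $\bigotimes^{k}W$. Expanding $\Alt_{n,k}\Alt_{n,k}^{T}$ and reparameterizing via $w=\pi'(v)$ rewrites the product as $\frac{1}{k!}\sum_{\tau}\sgn(\tau)\sum_{w\text{ distinct}}\ketbra{\tau(w)}{w}$; the contribution from tuples with a repeated entry may be harmlessly included, since pairing $\tau$ with $\tau\cdot(ij)$ for $w_{i}=w_{j}$ cancels, so the sum equals $P_{\wedge}$. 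The commutation then reduces to the elementary observation that $A(G^{\Box k})=\sum_{j=1}^{k}I^{\otimes(j-1)}\otimes A(G)\otimes I^{\otimes(k-j)}$ is invariant under any permutation of tensor factors (the summands are merely reindexed), so $A(G^{\Box k})$ commutes with every $\tau\in S_{k}$ and hence with the signed average $P_{\wedge}$.

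For part (3), computing the matrix entry for $u,v\in\binom{V}{k}$ gives
\[
(\Alt_{n,k}^{T}A(G^{\Box k})\Alt_{n,k})_{u,v}
= \frac{1}{k!}\sum_{\pi_{1},\pi_{2}\in S_{k}}\sgn(\pi_{1})\sgn(\pi_{2})\,\iverson{\pi_{2}(u)\sim\pi_{1}(v)\text{ in }G^{\Box k}}.
\]
Cartesian-product adjacency requires $\pi_{2}(u)$ and $\pi_{1}(v)$ to agree in all but one coordinate $i$ and differ there by a $G$-edge; tracing this back to $u,v$ shows that $u$ and $v$ share $k-1$ elements as sets, with the unique differing pair joined by an edge of $G$, matching the combinatorial adjacency in~\eqref{eqn:wedge-adjacency}. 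Assuming this adjacency, for each index $i$ and each $\pi_{2}$ with $\pi_{2}(i)=p$ (where $u_{p}$ is the unlike element of $u$), $\pi_{1}$ is uniquely determined by the matching of common elements, yielding exactly $k!$ valid triples $(i,\pi_{2},\pi_{1})$. A short calculation shows $\rho:=\pi_{1}\pi_{2}^{-1}$ is the same fixed permutation for every valid triple, so $\sgn(\pi_{1})\sgn(\pi_{2})=\sgn(\rho)$ throughout; the multiplicity $k!$ cancels the prefactor, leaving $\sgn(\rho)$, and $\rho$ is precisely the inverse of the connecting permutation $\pi$ in~\eqref{eqn:wedge-adjacency}, so the entry equals $\sgn(\pi)=\sigma'(u,v)$. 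The main obstacle is verifying this last $k!$-fold redundancy: that every way of realizing a single wedge-adjacency at the tensor level contributes the same sign, so that the double sum collapses cleanly to the combinatorial edge sign; once this is established, the lemma says exactly that $\wedgep{G}{k}$ is the signed quotient of $G^{\Box k}$ by the natural $S_{k}$-action on coordinates.
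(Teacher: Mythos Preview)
Your argument is correct in all three parts. The paper itself does not give a proof of this lemma; it simply cites \cite{bgmrt}, so there is no internal proof to compare against. What you have written is a self-contained verification that is exactly the kind of argument one expects in the cited reference: identify $\Alt_{n,k}\Alt_{n,k}^{T}$ with the anti-symmetrizer $P_{\wedge}$, use the $S_{k}$-invariance of $A(G^{\Box k})=\sum_{j}I^{\otimes(j-1)}\otimes A(G)\otimes I^{\otimes(k-j)}$ for commutation, and then check by direct counting that the $(u,v)$-entry collapses to $\sgn(\pi)$ for the connecting permutation. Two minor points worth tightening if you write this out in full: in part~(2), the cancellation pairing for a tuple $w$ with $w_{i}=w_{j}$ should be $\tau\mapsto(ij)\circ\tau$ under the paper's convention $\pi(u)_{m}=u_{\pi(m)}$ (so that $((ij)\tau)(w)=\tau((ij)(w))=\tau(w)$), rather than $\tau\cdot(ij)$; and in part~(3), it is worth stating explicitly that the diagonal entry ($u=v$) vanishes, which follows because agreement of $\pi_{2}(u)$ and $\pi_{1}(u)$ in $k-1$ coordinates forces $\pi_{1}=\pi_{2}$ and hence equality, not adjacency. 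Neither affects the validity of your outline.
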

\begin{proof}
See \cite{bgmrt}.
\end{proof}

\par\noindent
We note that Lemma \ref{lemma:exterior} is similar to Fact \ref{fact:equitable-partition}.
So from the third property above, $\wedgep{G}{k}$ may be viewed as the ``quotient'' of 
$G^{\Box k}$ modulo an equitable partition $\pi$ induced by $\Alt_{n,k}$; 
that is, $\wedgep{G}{k} = G^{\Box k}/\pi$.


\section{Basic Properties}

We start with the simplest fact about the exterior $k$th power: cases when $k=1$ and $k=n$.

\begin{fact}
For any $n$-vertex signed graph $\Sigma$, we have $\wedgep{\Sigma}{1} = \Sigma$,
and $\wedgep{\Sigma}{n} = K_{1}$.
\end{fact}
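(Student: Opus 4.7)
The proof plan consists of unpacking the combinatorial definition of the exterior power in the two boundary cases, both of which are essentially definition checks.

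For $k=1$, the plan is to identify $\binom{V}{1}$ with $V$ itself, since any single-element ``ordered'' tuple trivially satisfies the ascending condition. The only permutation in $S_{1}$ is the identity, so $\sgn(\pi) = +1$ always, and the adjacency rule \eqref{eqn:wedge-adjacency} collapses to: $(u,v)$ is an edge of $\wedgep{\Sigma}{1}$ iff $(u,v) \in E(G)$. The signing rule $\sigma'(u,v) = \sgn(\pi)\,\sigma(u_{\pi(i)},v_{i})$ then reduces to $\sigma'(u,v) = \sigma(u,v)$. Hence $\wedgep{\Sigma}{1} = \Sigma$ both as unsigned graphs and as signed graphs.

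For $k=n$, the plan is to observe that $\binom{V}{n}$ consists of exactly one element, namely the unique ordered tuple $v_{1} \wedge v_{2} \wedge \cdots \wedge v_{n}$ listing all vertices in $\prec$-order. Hence $V(\wedgep{\Sigma}{n})$ is a singleton, and with no pair of distinct vertices available there can be no edges, giving $\wedgep{\Sigma}{n} = K_{1}$. The only mild point requiring care is the absence of a self-loop: since the underlying graph of $\Sigma$ is simple and loopless, so is $\Sigma^{\Box n}$, and therefore the unique diagonal entry of $\Alt_{n,n}^{T} A(\Sigma^{\Box n}) \Alt_{n,n}$ furnished by Lemma \ref{lemma:exterior} vanishes.

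No real obstacle arises; the ``hard part,'' if any, is simply a sanity check that the signed-graph conventions of \eqref{eqn:wedge-adjacency} are consistent with the algebraic formulation through the anti-symmetrizer at the two extremes $k=1$ (where $\Alt_{n,1}$ is, up to identification, the identity on $\CC^{V}$) and $k=n$ (where $\Alt_{n,n}$ has a single column), after which the claim is immediate.
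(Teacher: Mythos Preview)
Your proposal is correct and takes essentially the same approach as the paper, which simply says ``Follows immediately from the definitions.'' You have merely (and correctly) unpacked those definitions in each boundary case; the extra remark about the absence of a self-loop at $k=n$ is fine but not something the paper bothers to spell out.
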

\begin{proof}
Follows immediately from the definitions.
\end{proof}

Next, we show that the exterior power $\wedgep{G}{k}$ is {\em well-defined} up to switching equivalence; 
that is, the signed graph $\wedgep{G}{k}$ we obtain is independent of the choice of ordering on the
vertex set $V$ of $G$.
This allows us to simply state $\wedgep{G}{k}$ without fear of ambiguity.
To this end, let $\wedgepo{G}{k}{\prec}$ denote the signed graph obtained from ordering
$V$ according to $\prec$. 

\begin{fact} \label{fact:ordering-invariant}
Let $G=(V,E)$ be a graph.
For any two total orderings $\prec$ and $\prec'$ over $V$, 
we have $\wedgepo{G}{k}{\prec} \sim \wedgepo{G}{k}{\prec'}$.
\end{fact}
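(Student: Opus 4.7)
\medskip\noindent\textbf{Proof plan.} My plan is to work within the algebraic framework provided by Lemma~\ref{lemma:exterior}, which expresses the adjacency matrix as $A(\wedgepo{G}{k}{\prec}) = \Alt_{n,k}^{T} A(G^{\Box k}) \Alt_{n,k}$. The operator $A(G^{\Box k})$ is intrinsic to $G$, so every appearance of the ordering $\prec$ is carried by the anti-symmetrizer $\Alt_{n,k}$, which uses $\prec$ only to pick an ordered representative within each antisymmetric equivalence class. I will show that passing from $\prec$ to a new total ordering $\prec'$ changes $\Alt_{n,k}$ only by right-multiplication by a diagonal $\pm 1$ matrix $D$. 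Granted that, the conclusion is immediate: since $D^{T}=D$ and $D^{2}=I$,
\[ A(\wedgepo{G}{k}{\prec'}) = D^{T}\,\Alt_{n,k}^{T}\, A(G^{\Box k})\, \Alt_{n,k}\, D = D^{-1} A(\wedgepo{G}{k}{\prec})\, D, \]
which is precisely the defining identity for switching equivalence.

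To construct $D$, I would first identify the column index sets of both $\Alt_{n,k}$ and $\Alt_{n,k}'$ with the family $\binom{V}{k}$ of $k$-subsets of $V$, using the canonical bijection that sends each subset to its ordered tuple (under $\prec$ and under $\prec'$, respectively). For each subset $S$, let $\rho_{S} \in S_{k}$ be the unique permutation carrying the $\prec$-ordered tuple of $S$ to the $\prec'$-ordered tuple. Substituting into $\Alt_{n,k} = \tfrac{1}{\sqrt{k!}} \sum_{v,\pi} \sgn(\pi)\,\ketbra{\pi(v)}{v}$ and reindexing the inner permutation sum by $\pi \mapsto \rho_{S}\pi$ should yield that the column of $\Alt_{n,k}'$ at $S$ equals $\sgn(\rho_{S})$ times the column of $\Alt_{n,k}$ at the same $S$. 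Defining $D$ to be the diagonal matrix with entries $D_{S,S} = \sgn(\rho_{S})$ then gives the desired factorization $\Alt_{n,k}' = \Alt_{n,k}\,D$.

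The whole argument is essentially a sign-tracking exercise encoding the classical fact that swapping two factors of a wedge product reverses its sign, so no substantive obstacle is expected. The one step that warrants care is the column re-indexing: the vertex sets $\binom{V}{k}_{\prec}$ and $\binom{V}{k}_{\prec'}$ are literally different as sets of ordered tuples, and it is exactly the subset-based identification that makes the change of basis genuinely diagonal (rather than a signed permutation). Since switching equivalence is defined with respect to a common vertex labeling, using this canonical identification is what delivers the statement of the fact in its intended form.
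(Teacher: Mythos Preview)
Your proposal is correct and follows the same core strategy as the paper: show that the two anti-symmetrizers are related by $\Alt_{n,k}^{\prec'} = \Alt_{n,k}^{\prec}\,D$ for a diagonal $\pm 1$ matrix $D$, then conjugate. The only difference is in how $D$ is produced. The paper first treats the special case where $\prec$ and $\prec'$ differ by a single transposition of two vertices $u,v$, writes down $D_{uv}$ explicitly (entry $-1$ at $S$ iff $u,v\in S$), and then inducts on the number of transpositions relating the two orderings. You instead handle an arbitrary pair of orderings in one shot by setting $D_{S,S}=\sgn(\rho_S)$, where $\rho_S$ reorders the $\prec$-tuple of $S$ into its $\prec'$-tuple, and verifying the column identity via the substitution $\pi\mapsto\rho_S\pi$. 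Your version is a bit more direct and makes the general diagonal entry explicit; the paper's version trades that for a simpler base case plus a routine induction. Your remark about identifying the two index sets $\binom{V}{k}_{\prec}$ and $\binom{V}{k}_{\prec'}$ via the underlying unordered subset is exactly the right bookkeeping point and is implicit in the paper's argument as well.
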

\begin{proof}
The total ordering $\prec$ determines the map $\Alt^{\prec}_{n,k}$.
Let $\prec'$ be a total ordering which differs from $\prec$ by a transposition on two vertices $u$ and $v$.
In a similar manner, the total ordering $\prec'$ determines its own anti-symmetrizer  $\Alt^{\prec'}_{n,k}$.
Define a diagonal matrix $D_{uv}$ with $\pm 1$ entries where, for each $a \in \binom{V}{k}$, 
its $(a,a)$-entry is
\begin{equation}
\bra{a}D_{uv}\ket{a} =
	\left\{\begin{array}{ll}
	-1 & \mbox{ if $u \in a$ and $v \in a$ } \\
	+1 & \mbox{ otherwise }
	\end{array}\right.
\end{equation}
The two anti-symmetrizers of $\prec$ and $\prec'$ are connected by $D_{uv}$ as follows:
\begin{equation} \label{eqn:order-switching}
\Alt_{n,k}^{\prec'}
	= \Alt_{n,k}^{\prec} D_{uv}.
\end{equation}
This shows that $\wedgepo{G}{k}{\prec}$ are switching equivalent to $\wedgepo{G}{k}{\prec'}$ since
\begin{eqnarray}
A(\xwedgepo{G}{k}{\prec'}) 
	& = & (\Alt_{n,k}^{\prec'})^{T} A(G^{\Box k}) \Alt_{n,k}^{\prec'} \\
	& = & D_{uv}^{T} (\Alt_{n,k}^{\prec})^{T} A(G^{\Box k}) \Alt_{n,k}^{\prec} D_{uv},
		\ \ \ \mbox{ by Equation (\ref{eqn:order-switching})} \\
	& = & D_{uv}^{-1} A(\xwedgepo{G}{k}{\prec}) D_{uv},
		\ \ \ \mbox{ since $D_{uv}^{T} = D_{uv}^{-1}$.}
\end{eqnarray}
In general, if $\prec$ and $\prec'$ are related by a permutation $\sigma$,
we may apply induction on the number of transpositions that form $\sigma$.
\end{proof}

The next fact shows that there is a mirror-symmetric isomorphism in the sequence of
exterior powers $\wedgep{G}{k}$.

\begin{fact} \label{fact:palindrome-isomorphic}
Let $G$ be a graph on $n$ vertices and let $1 \le k \le n-1$. 
Then, $|\wedgep{G}{k}| \cong |\wedgep{G}{n-k}|$.
\end{fact}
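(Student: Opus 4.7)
The plan is to exhibit an explicit isomorphism between $|\wedgep{G}{k}|$ and $|\wedgep{G}{n-k}|$ given by complementation of subsets. First I would reformulate adjacency in the underlying graph $|\wedgep{G}{k}|$ purely in terms of subsets: identify each ordered wedge $u_{1} \wedge \cdots \wedge u_{k}$ with its underlying $k$-subset $S = \{u_{1},\ldots,u_{k}\} \subseteq V$, and check from Equation (\ref{eqn:wedge-adjacency}) that two such subsets $S,T$ are adjacent in $|\wedgep{G}{k}|$ if and only if $|S \symdif T| = 2$ and the two elements of $S \symdif T$ form an edge of $G$. Indeed, the existence of a bijection $\pi$ with $u_{\pi(j)} = v_{j}$ for all $j \neq i$ forces $S$ and $T$ to agree on $k-1$ vertices, so the remaining pair $\{u_{\pi(i)}, v_{i}\} = S \symdif T$ is precisely an edge of $G$.

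Next I would define the map $\phi : \binom{V}{k} \to \binom{V}{n-k}$ by $\phi(S) = V \setminus S$. This is clearly a bijection, since $\binom{n}{k} = \binom{n}{n-k}$. The key observation is that complementation preserves symmetric difference:
\[
\phi(S) \symdif \phi(T) \;=\; (V \setminus S) \symdif (V \setminus T) \;=\; S \symdif T.
\]
Combined with the combinatorial description of adjacency from the previous step, this shows that $S \sim T$ in $|\wedgep{G}{k}|$ iff $\phi(S) \sim \phi(T)$ in $|\wedgep{G}{n-k}|$, which is exactly what is needed.

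The only potential obstacle is the careful verification of the set-theoretic adjacency criterion. One has to be mindful of two subtleties: (i) the permutation $\pi$ only ever shows up to re-align the tuples, so it plays no role once we pass to the underlying set, and (ii) the definition does not exclude $u_{\pi(i)} = v_{i}$, but in that case $(u_{\pi(i)}, v_{i})$ cannot be an edge of the simple graph $G$, so such pairs contribute no edge and do not interfere with the characterization $|S \symdif T|=2$. Once this is settled, the proof is essentially a one-line verification.

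I would also remark explicitly that the statement is only about the \emph{underlying} graphs $|\wedgep{G}{k}|$; the signed graphs $\wedgep{G}{k}$ and $\wedgep{G}{n-k}$ need not agree even up to switching, since $\phi$ need not carry the sign of the connecting permutation $\pi$ at level $k$ to the sign of the corresponding permutation at level $n-k$.
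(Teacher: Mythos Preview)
Your proof is correct and follows essentially the same approach as the paper: both use the complementation map $S \mapsto V \setminus S$ and the identity $(V \setminus S) \symdif (V \setminus T) = S \symdif T$ together with the characterization of adjacency via $|S \symdif T| = 2$ with $S \symdif T \in E(G)$. Your write-up is in fact more careful in justifying the set-theoretic adjacency criterion and in noting that the isomorphism is only at the level of the underlying unsigned graphs.
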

\begin{proof}
The map $\tau$ from $\binom{V}{k}$ to $\binom{V}{n-k}$ defined as $\tau(u) = V(G) \setminus u$
is the claimed isomorphism.
If $u$ is adjacent to $v$ in $|\wedgep{G}{k}|$, then $u \symdif v = \{a,b\}$ for some vertices $a$ and $b$
with $(a,b) \in E(G)$. 
Since $(V(G) \setminus u) \symdif (V(G) \setminus v) = u \symdif v$, this implies $\tau(u)$ is adjacent to 
$\tau(v)$ in $|\wedgep{G}{n-k}|$. The converse follows in a similar fashion.
\end{proof}

\par\noindent{\em Remark}:
Fact \ref{fact:palindrome-isomorphic} was also described in Osborne \cite{o06}; we provide its proof
for completeness. A similar mirror-symmetry condition is also known for Johnson graphs 
(see Godsil and Royle \cite{godsil-royle01}).

\vspace{.1in}
Next, we show that the exterior power, as a graph operator on signed graphs, preserves switching equivalence.

\begin{fact} \label{fact:switching-class-preserving}
Let $\Sigma_{1}$ and $\Sigma_{2}$ be two signed graphs on $n$ vertices satisfying $\Sigma_{1} \sim \Sigma_{2}$. 
Then, $\wedgep{\Sigma_{1}}{k} \sim \wedgep{\Sigma_{2}}{k}$ for each positive integer $k$ with $1 \le k \le n-1$.
\end{fact}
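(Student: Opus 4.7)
The plan is to lift the diagonal $\pm 1$ switching matrix on $V$ to a diagonal $\pm 1$ switching matrix on $\binom{V}{k}$, by exploiting how $D^{\otimes k}$ passes through the anti-symmetrizer $\Alt_{n,k}$.

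First, I would start from the hypothesis $A(\Sigma_1) = D^{-1} A(\Sigma_2) D$ for a diagonal $\pm 1$ matrix $D$ of size $n \times n$, and use the Cartesian product formula $A(\Sigma^{\Box k}) = \sum_{i=1}^{k} I^{\otimes(i-1)} \otimes A(\Sigma) \otimes I^{\otimes(k-i)}$ to conclude
\[
A(\Sigma_1^{\Box k}) = (D^{\otimes k})^{-1} A(\Sigma_2^{\Box k}) D^{\otimes k},
\]
since $D^{\otimes k}$ commutes with each identity factor and conjugates the single $A(\Sigma)$ factor in each summand to $A(\Sigma_1) \to A(\Sigma_2)$.

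Next, define a diagonal $\pm 1$ matrix $\tilde{D}$ on $\CC^{\binom{V}{k}}$ by $\bra{v}\tilde{D}\ket{v} = \prod_{j=1}^{k} \bra{v_j}D\ket{v_j}$ for each wedge $v = v_1 \wedge \cdots \wedge v_k$. The crucial intertwining identity is
\[
D^{\otimes k} \, \Alt_{n,k} = \Alt_{n,k} \, \tilde{D}.
\]
To see this, apply $D^{\otimes k}$ to the $v$-th column $\frac{1}{\sqrt{k!}}\sum_{\pi} \sgn(\pi) \ket{\pi(v)}$ of $\Alt_{n,k}$: each summand picks up the scalar $\prod_j \bra{v_{\pi(j)}}D\ket{v_{\pi(j)}}$, which is independent of $\pi$ because $D$ is diagonal, hence equals $\bra{v}\tilde{D}\ket{v}$ and factors out. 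Transposing and using that $D^{\otimes k}$ and $\tilde D$ are self-inverse gives $\Alt_{n,k}^{T} (D^{\otimes k})^{-1} = \tilde{D}^{-1} \Alt_{n,k}^{T}$.

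Finally, substitute into the defining formula from Lemma \ref{lemma:exterior}(3):
\begin{align*}
A(\wedgep{\Sigma_1}{k})
 &= \Alt_{n,k}^{T} A(\Sigma_1^{\Box k}) \Alt_{n,k} \\
 &= \Alt_{n,k}^{T} (D^{\otimes k})^{-1} A(\Sigma_2^{\Box k}) D^{\otimes k} \Alt_{n,k} \\
 &= \tilde{D}^{-1} \Alt_{n,k}^{T} A(\Sigma_2^{\Box k}) \Alt_{n,k} \tilde{D} \\
 &= \tilde{D}^{-1} A(\wedgep{\Sigma_2}{k}) \tilde{D},
\end{align*}
which exhibits $\wedgep{\Sigma_1}{k} \sim \wedgep{\Sigma_2}{k}$ via the diagonal $\pm 1$ matrix $\tilde{D}$. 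I expect essentially no obstacle here; the only subtle point is verifying the intertwining identity, which reduces to observing that a diagonal matrix acts by a scalar on each $\pi(v)$ for fixed wedge $v$, and that scalar is symmetric in the factors of $v$.
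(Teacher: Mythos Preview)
Your proof is correct and follows essentially the same approach as the paper: lift $D$ to $D^{\otimes k}$ on the Cartesian power, define the induced diagonal matrix $\tilde{D}$ on wedges by the product of the diagonal entries, use the intertwining identity $D^{\otimes k}\Alt_{n,k}=\Alt_{n,k}\tilde{D}$, and conjugate through the definition of $A(\wedgep{\Sigma}{k})$. If anything, you supply more justification for the intertwining step than the paper does.
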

\begin{proof}
If $\Sigma_{1} \sim \Sigma_{2}$, then there is a diagonal $n \times n$ matrix $D$ with $\pm 1$ entries
so that $A(\Sigma_{2}) = DA(\Sigma_{1})D$. This implies 
$A(\Sigma_{2}^{\Box k}) = D^{\otimes k} A(\Sigma_{1}^{\Box k}) D^{\otimes k}$.
Let $\hat{D}$ be a diagonal matrix with $\pm 1$ entries of dimension $\binom{n}{k}$ where,
for each $u \in \binom{V}{k}$, its $(u,u)$-entry is defined as
\begin{equation}
\bra{u}\hat{D}\ket{u} = \prod_{j=1}^{k} \bra{u_{j}}D\ket{u_{j}},
\end{equation}
which depends on the product of the diagonal entries of $D$.
We note that
\begin{equation}
\Alt_{n,k} \hat{D} = D^{\otimes k} \Alt_{n,k}.
\end{equation}
Therefore, we have
\begin{eqnarray}
A(\xwedgep{\Sigma_{2}}{k}) 
	& = & \Alt_{n,k}^{T}A(\Sigma_{2}^{\Box k})\Alt_{n,k} \\
	& = & \Alt_{n,k}^{T} D^{\otimes k} A(\Sigma_{1}^{\Box k}) D^{\otimes k} \Alt_{n,k} \\
	& = & \hat{D} \ \Alt_{n,k}^{T} A(\Sigma_{1}^{\Box k}) \Alt_{n,k} \hat{D} \\
	& = & \hat{D} A(\xwedgep{\Sigma_{1}}{k}) \hat{D},
\end{eqnarray}
which proves the claim.
\end{proof}


In the following, we consider the $k$-fold Cartesian product graph $G^{\Box k}$ minus 
its ``diagonal'' (which are the $k$-tuples containing a repeated element), but without the
conjugation (or quotient) action. We relate this to the gain graphs and signed graphs 
corresponding to $\wedgep{G}{k}$.
Note that there is a natural way to convert a gain graph over the symmetric group into a signed graph --
simply take the sign of the permutations which label the edges of the gain graph.

But, first we recall some definitions about covers of gain graphs.
Let $\Phi = (G,X,\varphi)$ be a gain graph on a graph $G = (V,E)$ and a group $X$ 
with the group-valued edge-signing function $\varphi: E \rightarrow X$.
The graph $\SC$ is called a $|X|$-cover of $\Phi$ if the adjacency matrix of $\SC$ is given by
\begin{equation}
A(\SC) = \sum_{g \in X} A(G^{g}) \otimes P_{g}.
\end{equation}
Here, $G^{g}$ is the subgraph of $G$ induced by edges which are signed with $g \in X$; 
that is, $G^{g} = (V,E^{g})$ where $E^{g} = \{e \in E : \varphi(e) = g\}$.
Also, $P_{g}$ is the $|X|$-dimensional permutation matrix induced by $g$;
its entries are given by $P_{g}[a,b] = \iverson{a = gb}$, for group elements $a,b \in X$.

\vspace{.1in}

\begin{fact} \label{fact:gain-cover}
Let $G = (V,E)$ be a graph on $n$ vertices and let $k$ be a positive integer $1 \le k < n$. 
Then, $G^{\Box k} \setminus \Ker(\Alt_{n,k}^{T})$ is a $|S_{k}|$-cover of the gain graph 
$\Phi(\wedgep{G}{k},S_{k},\varphi)$. 
\end{fact}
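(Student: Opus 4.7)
The plan is to unpack the definition of a $|S_k|$-cover and exhibit an explicit isomorphism between the vertex set of $G^{\Box k}\setminus\Ker(\Alt_{n,k}^T)$ and $\binom{V}{k}\times S_k$ under which the adjacency structure of the distinct-entry part of $G^{\Box k}$ becomes the cover sum $\sum_{h\in S_k} A(H^h)\otimes P_h$, where $H=\wedgep{G}{k}$ and $H^h$ is the subgraph of edges with gain $h$.

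First I would observe that $\ket{u}\in\Ker(\Alt_{n,k}^T)$ if and only if the $k$-tuple $u\in V^k$ has a repeated entry: indeed $\Alt_{n,k}^T \ket{u}$ sums the signed orbit of $u$ under $S_k$, which cancels precisely when a transposition stabilizes $u$. Hence $G^{\Box k}\setminus \Ker(\Alt_{n,k}^T)$ is the subgraph of $G^{\Box k}$ induced by $k$-tuples $u=(u_1,\ldots,u_k)$ with all $u_j$ distinct. For each such tuple there is a unique pair $(v,\sigma)\in\binom{V}{k}\times S_k$ with $v$ the sorted version and $u_j = v_{\sigma(j)}$ for all $j$, giving the desired bijection of vertex sets.

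Next I would translate the adjacency of $G^{\Box k}$ through this bijection. Suppose $u=\sigma(v)$ and $u'=\sigma'(v')$ are adjacent in $G^{\Box k}$, differing only in coordinate $i$ with $(u_i,u'_i)\in E(G)$. Setting $\pi=\sigma(\sigma')^{-1}$ and $i'=\sigma'(i)$, the equalities $v_{\sigma(j)}=v'_{\sigma'(j)}$ for $j\ne i$ become $v_{\pi(j')}=v'_{j'}$ for all $j'\ne i'$, while the remaining equation reads $(v_{\pi(i')},v'_{i'})\in E(G)$. This is exactly the condition from Equation~(\ref{eqn:wedge-adjacency}) witnessing that $(v,v')\in E(\wedgep{G}{k})$ with connecting permutation $\pi$, hence with gain $\varphi(v,v')=\pi$. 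Conversely, any edge $(v,v')\in E(H^\pi)$ together with any $\sigma'\in S_k$ yields an adjacent pair $(u,u')$ in $G^{\Box k}$ via $\sigma=\pi\sigma'$.

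Thus an edge between $(v,\sigma)$ and $(v',\sigma')$ in $G^{\Box k}\setminus\Ker(\Alt_{n,k}^T)$ occurs precisely when there exists $\pi\in S_k$ with $(v,v')\in E(H^\pi)$ and $\sigma=\pi\sigma'$; the latter is the condition $P_\pi[\sigma,\sigma']=1$. Collecting these contributions over all $\pi\in S_k$ gives
\[
A\bigl(G^{\Box k}\setminus\Ker(\Alt_{n,k}^T)\bigr)=\sum_{\pi\in S_k} A(H^\pi)\otimes P_\pi,
\]
which is the definition of a $|S_k|$-cover of $\Phi(\wedgep{G}{k},S_k,\varphi)$. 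The only real bookkeeping obstacle is matching the convention so that the gain $\pi$ appearing in the adjacency of $\wedgep{G}{k}$ is exactly the group element $\sigma(\sigma')^{-1}$ governing $P_\pi$; once this identification is made the statement follows directly from the definitions.
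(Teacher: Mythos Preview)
Your proof is correct and follows essentially the same route as the paper: both arguments use the bijection $u\leftrightarrow(v,\sigma)$ given by $u=\sigma(v)$ (the paper writes the inverse map $\phi(v,\sigma)=\sigma(v)$) and then verify that the Cartesian-product adjacency on distinct-entry tuples matches the cover adjacency $\sum_{\pi}A(H^{\pi})\otimes P_{\pi}$. Your treatment is in fact more careful than the paper's, which only sketches one direction of the edge correspondence; your explicit computation that the connecting permutation is $\pi=\sigma(\sigma')^{-1}$ and hence satisfies $P_{\pi}[\sigma,\sigma']=1$ is exactly the bookkeeping the paper leaves implicit.
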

\begin{proof}
Let $\SC$ be the $|S_{k}|$-cover of $\Phi(\wedgep{G}{k},S_{k},\varphi)$.
Each vertex of $\SC$ is of the form $(u,\pi)$ where $u \in \binom{V}{k}$ and $\pi \in S_{k}$.
Let $D = \Ker(\Alt_{n,k}^{T})$ and consider the map $\phi: \SC \rightarrow G^{\Box k} \setminus D$ 
defined as $\phi(u,\pi) = \pi(u)$. 
We claim that $\phi$ is an isomorphism between $\SC$ and $G^{\Box k} \setminus D$.
Suppose $(u,\pi)$ and $(v,\tau)$ is adjacent in $\SC$. 
This implies $u$ and $v$ are adjacent in $G^{\wedge k}$ and $\tau = \pi \circ \varphi(u,v)$.
Therefore, $\pi(u)$ is adjacent to $\tau(v) = \varphi(u,v)(\pi(v))$ in $G^{\Box k} \setminus D$
from the definition of the gain graph $\Phi(\wedgep{G}{k},S_{k},\varphi)$.
\end{proof}

\vspace{.1in}

\begin{corollary} \label{cor:double-cover}
Let $G$ be a graph with at least three vertices and let $D = \{a \otimes a : a \in V(G)\}$.
Then, $G^{\Box 2} \setminus D$ is a {double}-cover of the signed graph $\wedgep{G}{2}$. 
\end{corollary}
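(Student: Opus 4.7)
The plan is to derive Corollary \ref{cor:double-cover} as the $k=2$ specialization of Fact \ref{fact:gain-cover}. The hypothesis that $G$ has at least three vertices is simply to ensure $k = 2 \le n-1$, which is the range in which $\wedgep{G}{2}$ is defined.

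Two identifications are needed. First, I would check that for $k = 2$ the set of standard basis vectors of $\bigotimes^{2} W$ annihilated by $\Alt_{n,2}^{T}$ is exactly the diagonal $D = \{a \otimes a : a \in V(G)\}$. This is a direct computation from the definition of $\Alt_{n,k}$: for $a,b \in V(G)$, the only $v \in \binom{V}{2}$ and $\pi \in S_{2}$ with $\pi(v) = (a,b)$ exist precisely when $a \neq b$, in which case $\Alt_{n,2}^{T}(\ket{a} \otimes \ket{b}) = \pm \tfrac{1}{\sqrt{2}}\ket{a \wedge b}$, with sign equal to the $\sgn(\pi)$ of the unique sorting permutation; when $a = b$, no such pair exists and the image is $0$. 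This matches the two notations $G^{\Box 2} \setminus \Ker(\Alt_{n,2}^{T})$ (from Fact \ref{fact:gain-cover}) and $G^{\Box 2} \setminus D$ (from the corollary), provided $\Ker$ is interpreted as the set of vertices whose basis vector lies in the kernel, rather than as a linear subspace.

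Second, I would show that a $|S_{2}|$-cover of the gain graph $\Phi(\wedgep{G}{2}, S_{2}, \varphi)$, in the sense of Fact \ref{fact:gain-cover}, is precisely a double cover of the signed graph $\wedgep{G}{2}$ in the standard sense. The key point is that $|S_{2}| = 2$ and $\sgn : S_{2} \to \{+1,-1\}$ is a group isomorphism, so the gain-graph data $\varphi$ over $S_{2}$ and the signing $\sigma' = \sgn \circ \varphi$ of $\wedgep{G}{2}$ carry the same edge information. The relevant permutation matrices are $P_{e} = I$ and $P_{\tau} = \bigl(\begin{smallmatrix}0 & 1\\ 1 & 0\end{smallmatrix}\bigr)$, so Fact \ref{fact:gain-cover}'s formula $A(\SC) = \sum_{g \in S_{2}} A(G^{g}) \otimes P_{g}$ collapses to $A(\Sigma^{+}) \otimes I + A(\Sigma^{-}) \otimes P_{\tau}$ with $\Sigma = \wedgep{G}{2}$ partitioned into its positive and negative edges, which is exactly the adjacency matrix of the signed double cover.

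I do not expect any serious obstacle here; the only subtlety is pinning down the convention for the ``vertex-set minus kernel'' notation in Fact \ref{fact:gain-cover}, after which the corollary follows simply by unwinding definitions with $k=2$.
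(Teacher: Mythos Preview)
Your proposal is correct and matches the paper's approach exactly: the paper states Corollary~\ref{cor:double-cover} as an immediate consequence of Fact~\ref{fact:gain-cover} with no separate argument, and your two identifications (that $\Ker(\Alt_{n,2}^{T})$ picks out precisely the diagonal $D$, and that an $|S_{2}|$-cover of the $S_{2}$-gain graph is the signed double cover via $\sgn:S_{2}\to\{\pm1\}$) are just the unwinding of definitions needed to see this.
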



\section{Unsigned Graphs}

To provide an illustration of upcoming techniques we used for signed graphs,
we examine the exterior powers of some well-known unsigned families of graphs,
such as, cliques, paths, cycles, and hypercubes.

\subsection{Cliques}

Given positive integers $n \ge k \ge \ell \ge 1$, the graph $J(n,k,\ell)$ has vertices which are $k$-element 
subsets of $[n]$ and edges between two subsets if their intersection is of size $\ell$ 
(see Godsil and Royle \cite{godsil-royle01}). The family of graphs $J(n,k,k-1)$ is known as the {\em Johnson} 
graphs.

\vspace{.1in}

\begin{fact} \label{fact:johnson}
For positive integers $n$ and $k$ where $1 \le k \le n-1$, 
we have $|\wedgep{K_{n}}{k}| \cong J(n,k,k-1)$. 
\end{fact}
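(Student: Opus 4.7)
The plan is to exhibit an explicit bijection of vertex sets and then check that edges correspond. The natural map is $\Psi: \binom{V}{k} \to \binom{[n]}{k}$ sending the wedge $u_{1} \wedge \cdots \wedge u_{k}$ to the underlying $k$-set $\{u_{1},\ldots,u_{k}\}$; this is a bijection since each $k$-set has a unique strictly $\prec$-increasing enumeration. The task then reduces to showing that $u$ is adjacent to $v$ in $|\wedgep{K_{n}}{k}|$ if and only if $|\Psi(u) \cap \Psi(v)| = k-1$.

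For the forward direction I would start from the adjacency condition in Equation (\ref{eqn:wedge-adjacency}): there exists a permutation $\pi \in S_{k}$ and an index $i$ such that $u_{\pi(j)} = v_{j}$ for every $j \neq i$, and $(u_{\pi(i)}, v_{i}) \in E(K_{n})$. Since every pair of distinct vertices of $K_{n}$ is an edge, the last condition says nothing more than $u_{\pi(i)} \neq v_{i}$. The $k-1$ agreements $u_{\pi(j)} = v_{j}$ imply $|\Psi(u) \cap \Psi(v)| \geq k-1$, and the disagreement in the remaining coordinate together with the fact that the $u_{j}$ and $v_{j}$ are all distinct forces $|\Psi(u) \cap \Psi(v)| = k-1$ exactly (equality, not $k$, because $u \neq v$ as wedges).

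Conversely, given $k$-sets $A = \Psi(u)$ and $B = \Psi(v)$ with $|A \cap B| = k-1$, write $A = C \cup \{a\}$ and $B = C \cup \{b\}$ with $a \neq b$ and $|C| = k-1$. One then defines $\pi$ to be the unique permutation of $[k]$ that matches the position of each element of $C$ in the ordered list of $u$ to its position in the ordered list of $v$, and sends the position of $a$ in $u$ to the position of $b$ in $v$. With this $\pi$, all coordinates agree except at one index $i$, where $(u_{\pi(i)}, v_{i}) = (a, b)$ is an edge of $K_{n}$, so $u$ and $v$ are adjacent in $\wedgep{K_{n}}{k}$.

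There is no real obstacle here; the content is purely definitional. The only point requiring a moment's care is verifying that "agreement in $k-1$ positions after some permutation" translates exactly to "intersection of size $k-1$ as sets," which uses both that wedge products have distinct entries and that $K_{n}$ imposes no constraint beyond $u_{\pi(i)} \neq v_{i}$. The conclusion $|\wedgep{K_{n}}{k}| \cong J(n,k,k-1)$ then follows immediately from the definition of the Johnson graph.
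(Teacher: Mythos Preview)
Your proof is correct and follows essentially the same approach as the paper: identify vertices of $\wedgep{K_{n}}{k}$ with $k$-subsets and observe that adjacency reduces to $|A \cap B| = k-1$ (equivalently, $|A \symdif B| = 2$) because every pair of distinct vertices in $K_{n}$ is an edge. The paper's proof is a one-line sketch of exactly this idea, while you have written out the bijection and both directions of the edge correspondence explicitly.
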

\begin{proof}
Since any two distinct vertices of $K_{n}$ are adjacent, the exterior power
$\wedgep{K_{n}}{k}$ has each $k$-subset $A$ of $V$ adjacent to every other $k$-subset $B$ of $V$
provided $|A \symdif B| = 2$. This is exactly the definition of the Johnson graphs $J(n,k,k-1)$.
\end{proof}

\noindent
The above fact also appeared in \cite{o06}; we include its proof for completeness.

\subsection{Paths}

\begin{fact} \label{fact:path-exterior}
For positive integers $n$ and $k$ where $1 \le k \le n-1$, 
the exterior power of a path $\wedgep{P_{n}}{k}$ is always balanced.
\end{fact}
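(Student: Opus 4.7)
The plan is to show that every edge of $\wedgep{P_n}{k}$ carries the sign $+1$, which immediately yields balance (in fact switching equivalence to the underlying unsigned graph, since every cycle would then be positive). The key observation is that for the path, the permutation $\pi$ that ``connects'' any two adjacent wedge vertices is forced to be the identity, so $\sgn(\pi)=+1$ and no sign is introduced.

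To make this precise, fix the natural total ordering $1 \prec 2 \prec \cdots \prec n$ on $V(P_n)=[n]$. Suppose $u=u_1\wedge\cdots\wedge u_k$ and $v=v_1\wedge\cdots\wedge v_k$ are adjacent in $\wedgep{P_n}{k}$. By the definition of exterior powers together with the fact that the only edges of $P_n$ join consecutive integers, I would first argue that $u\symdif v=\{i,i+1\}$ for some $i$, with (say) $i\in u\setminus v$ and $i+1\in v\setminus u$. Next, letting $S:=u\cap v$, I would compute the position of $i$ in the sorted tuple $u$ and the position of $i{+}1$ in the sorted tuple $v$: each equals $1+|\{s\in S:s<i\}|$, using the crucial fact that no integer lies strictly between $i$ and $i{+}1$, so $\{s\in S:s<i\}=\{s\in S:s<i+1\}$. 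Hence the differing elements occupy the same index $p$ in their respective sorted lists, while the shared elements, being strictly ordered, occupy identical positions in both $u$ and $v$.

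This forces the connecting bijection to be $\pi=\mathrm{id}$: we have $u_j=v_j$ for all $j\ne p$, and $(u_p,v_p)=(i,i+1)\in E(P_n)$, which satisfies equation~(\ref{eqn:wedge-adjacency}). The edge sign is then
\[
\sigma'(u,v) \;=\; \sgn(\pi)\cdot\sigma(u_p,v_p) \;=\; (+1)\cdot(+1) \;=\; +1,
\]
since $P_n$ is unsigned. With every edge positive, every cycle is positive and $\wedgep{P_n}{k}$ is balanced.

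The only potentially delicate step is the positional computation that pins $\pi$ down as the identity; the rest is bookkeeping from the definitions. I expect no serious obstacle, since the ``no integer between $i$ and $i{+}1$'' observation is exactly why the path behaves so rigidly under the exterior power construction — a property that will fail once the underlying graph contains a cycle or a branching vertex, foreshadowing the forbidden-subgraph analysis in the main theorem.
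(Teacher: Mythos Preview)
Your proposal is correct and follows essentially the same approach as the paper: both fix the natural linear ordering on the path, observe that any edge of $\wedgep{P_n}{k}$ has symmetric difference $\{i,i+1\}$, and conclude that the connecting permutation is the identity so that every edge is positive. Your positional computation showing why the differing elements occupy the same index is a welcome elaboration of a step the paper merely asserts.
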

\begin{proof}
Let the vertex set of $P_{n}$ be $V = \{a_{0},\ldots,a_{n-1}\}$ with $a_{i}$ is adjacent to $a_{j}$
whenever $i - j = \pm 1$ except that $a_{0}$ is only connected to $a_{1}$ and $a_{n-1}$ is only
connected to $a_{n-2}$. By Fact \ref{fact:ordering-invariant}, we may use the ordering $a_{i} \prec a_{j}$ 
whenever $i < j$. 
Consider an arbitrary cycle of length $\ell$ in $\wedgep{P_{n}}{k}$ that is given by the 
sequence of vertices $w_{1},\ldots,w_{\ell}$, where $w_{i} \in \binom{V}{k}$.
The permutation that connects $w_{i}$ and $w_{i+1}$ is always the identity permutation 
since $w_{i} \symdif w_{i+1} = \{a_{j},a_{j+1}\}$ for some $j$. 
Thus, the sign of all edges in $\wedgep{P_{n}}{k}$ is always positive.
\end{proof}

\subsection{Cycles}

\begin{fact} \label{fact:cycle-exterior}
For positive integers $n$ and $k$ where $1 \le k \le n-1$, 
the exterior power of a cycle $\wedgep{C_{n}}{k}$ is balanced if and only if $k$ is odd.
\end{fact}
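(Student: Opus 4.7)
The plan is to mimic the proof of Fact \ref{fact:path-exterior} but to track carefully the sign contribution of the ``wrap-around'' edge $\{a_0,a_{n-1}\}$. Order the vertices as $a_0 \prec a_1 \prec \cdots \prec a_{n-1}$ and classify each edge of $\wedgep{C_n}{k}$ as \emph{local} if it comes from swapping $a_\ell$ and $a_{\ell+1}$ for some $0\le\ell\le n-2$, or as \emph{wrap-around} if it comes from swapping $a_0$ and $a_{n-1}$. Exactly as in the path case, a local swap does not disturb sorted order, so its connecting permutation is the identity and its sign is $+1$. A wrap-around swap, however, moves $a_0$ out of position $1$ and installs $a_{n-1}$ at position $k$ (or vice versa), so the only permutation connecting the two ordered tuples is the $k$-cycle $(1\ 2\ \cdots\ k)$, with sign $(-1)^{k-1}$. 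Both sign computations reduce to a short sorting argument like the one in the proof of Fact \ref{fact:path-exterior}.

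It then follows immediately that any cycle $C$ in $\wedgep{C_n}{k}$ has sign $\bigl((-1)^{k-1}\bigr)^{w(C)}$, where $w(C)$ counts the wrap-around edges in $C$. When $k$ is odd, $(-1)^{k-1}=1$, so every edge and hence every cycle is positive, and the graph is balanced.

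For $k$ even, it suffices to exhibit one cycle whose wrap-around count is odd. Assume first $n \ge k+2$ and start at $S_0=\{a_0,a_1,\ldots,a_{k-1}\}$. Apply one wrap-around edge (swap $a_0 \leftrightarrow a_{n-1}$) to reach $\{a_1,\ldots,a_{k-1},a_{n-1}\}$; then slide $a_{n-1}$ down to $a_k$ via the $n-k-1$ local edges $a_{n-1}\to a_{n-2},\ldots,a_{k+1}\to a_k$, arriving at $\{a_1,\ldots,a_k\}$; finally slide $a_j\to a_{j-1}$ for $j=1,2,\ldots,k$ in succession via $k$ local edges, returning to $S_0$. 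This closed walk uses exactly one wrap-around edge, so its sign is $(-1)^{k-1}=-1$, and $\wedgep{C_n}{k}$ is unbalanced.

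The one place the construction above breaks down is $n=k+1$, since the sliding step requires $a_{n-2}\notin\{a_1,\ldots,a_{k-1}\}$; this is the main thing to watch out for. Here I would invoke Fact \ref{fact:palindrome-isomorphic} to identify $|\wedgep{C_{k+1}}{k}|\cong|\wedgep{C_{k+1}}{1}|=C_{k+1}$, label vertices by the excluded element $T_j=\{a_0,\ldots,a_k\}\setminus\{a_j\}$, and directly compute: the edge $T_0\sim T_k$ is the unique wrap-around edge and carries sign $(-1)^{k-1}$, while the other $k$ edges are local with sign $+1$. The unique cycle of this graph therefore has sign $(-1)^{k-1}$, which is $+1$ exactly when $k$ is odd, completing the fact.
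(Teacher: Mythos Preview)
Your proof is correct and follows the same approach as the paper: order the vertices naturally, classify each edge of $\wedgep{C_n}{k}$ as local (connecting permutation the identity, sign $+1$) or wrap-around (connecting permutation a $k$-cycle, sign $(-1)^{k-1}$), and read off balance from the parity of $k$. The paper's proof stops after this classification, whereas you go further and explicitly exhibit a negative cycle when $k$ is even (including the boundary case $n=k+1$); this extra care is welcome, since having some negative edges does not by itself force unbalance, but the underlying strategy is the same.
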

\begin{proof}
Let the vertex set of $C_{n}$ be $V = \{a_{0},\ldots,a_{n-1}\}$ with $a_{i}$ is adjacent to $a_{j}$
whenever $i - j \equiv \pm 1\pmod{n}$. 
By Fact \ref{fact:ordering-invariant}, we may use the ordering $a_{i} \prec a_{j}$ whenever $i < j$. 
Consider an arbitrary cycle of length $\ell$ in $\wedgep{C_{n}}{k}$ that is given by the 
sequence of vertices $w_{1},\ldots,w_{\ell}$, where $w_{i} \in \binom{V}{k}$.
The permutation that connects $w_{i}$ and $w_{i+1}$ is a $k$-cycle 
if $w_{i} \symdif w_{i+1} = \{a_{0},a_{n-1}\}$, otherwise it is the identity permutation.
Since a $k$-cycle has a negative sign whenever $k$ is even,
this shows $\wedgep{C_{n}}{k}$ is balanced if and only if $k$ is odd.
\end{proof}

\vspace{.1in}
We will later generalize Facts \ref{fact:path-exterior} and \ref{fact:cycle-exterior}
for signed graphs in Section \ref{section:signed-balanced}.

\subsection{Hypercubes}

\begin{fact}
Let $n \ge 3$ be a positive integer and let $k$ be an integer where $1 \le k \le N-1$
and $N=2^{n}$. Then, $\bigwedge^{k} Q_{n}$ is a signed bipartite graph and
$\bigwedge^{k} Q_{n}$ is unbalanced except for $k=1,N-1$.
\end{fact}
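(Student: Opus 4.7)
The plan is to split the statement into a bipartite claim about $|\bigwedge^{k} Q_n|$ and a balance claim about the full signed structure, then handle $k=1$, $k=N-1$, and $2 \le k \le N-2$ separately; each becomes a short application of results already in the paper.

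For bipartiteness I would define a parity function $f\colon \binom{V(Q_n)}{k} \to \{0,1\}$ by $f(u_1 \wedge \cdots \wedge u_k) = \sum_{j=1}^{k} \mathrm{wt}(u_j) \pmod{2}$, where $\mathrm{wt}$ denotes Hamming weight in $\{0,1\}^{n}$. If $u$ and $v$ are adjacent in $|\bigwedge^{k} Q_n|$, then $u \symdif v = \{a,b\}$ for some edge $ab \in E(Q_n)$; since $a$ and $b$ differ in exactly one coordinate, their Hamming weights have opposite parities, so $f(u) \neq f(v)$. The partition induced by $f$ is therefore a proper bipartition of $|\bigwedge^{k} Q_n|$.

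For the boundary values, $k=1$ is immediate since $\bigwedge^{1} Q_n = Q_n$ is unsigned and hence balanced. For $k=N-1$ I would invoke the $k=n-1$ clause of Theorem \ref{thm:signed-balanced}: viewing $Q_n$ as an all-positive signed graph, bipartiteness eliminates odd cycles, and every even cycle is positive as a product of $+1$'s. Both requirements (even cycles positive, odd cycles negative) therefore hold, the latter vacuously, and $\bigwedge^{N-1} Q_n$ is balanced.

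The intermediate range $2 \le k \le N-2$ is handled by the forbidden subgraph property stated after Theorem \ref{thm:signed-balanced}: a balanced exterior power in this range forces the base graph to be a signed path or signed cycle, neither of which contains a claw $K_{1,3}$. But for $n \ge 3$ the hypercube $Q_n$ is $n$-regular with $n \ge 3$, so any vertex together with any three of its coordinate-neighbors already induces a $K_{1,3}$, ruling out balance. The whole argument is essentially bookkeeping on top of Theorem \ref{thm:signed-balanced}; the only subtle step is noticing in the $k=N-1$ case that bipartiteness of $Q_n$ turns the odd-cycle hypothesis into a vacuously satisfied condition rather than an obstruction.
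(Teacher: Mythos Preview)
Your proof is correct and follows essentially the same approach as the paper: both arguments use the Hamming-weight parity to produce a bipartition of $|\bigwedge^{k} Q_{n}|$, invoke the claw $K_{1,3}$ inside $Q_{n}$ (via Lemma~\ref{lemma:claw-free}, equivalently the first clause of Theorem~\ref{thm:signed-balanced}) to rule out balance for $2 \le k \le N-2$, and handle the endpoints $k=1$ and $k=N-1$ directly, the latter via bipartiteness of $Q_{n}$ together with the $k=n-1$ characterization. The only cosmetic difference is that the paper cites Corollary~\ref{cor:unsigned-balanced} for the $k=N-1$ case while you unpack the same reasoning from Theorem~\ref{thm:signed-balanced}.
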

\begin{proof}
To see that $|\bigwedge^{k} Q_{n}|$ is bipartite, note that we may identify the vertices of 
$Q_{n}$ with the set $V = \zo^{n}$ of binary strings of length $n$. There is a natural bipartition 
of $V$ into $V = W_{0} \uplus W_{1}$ according to the Hamming weight of the strings, where 
$W_{0}$ and $W_{1}$ are the binary strings with even and odd Hamming weight, respectively.
Let $V_{\ell}$ be the collection of $k$-subsets of $V$ which contain $\ell$ elements from
$W_{0}$ and $k-\ell$ elements from $W_{1}$.
Then, we have a bipartition $\wedgep{Q_{n}}{k} = V_{0} \uplus V_{1}$ where
$V_{0} = \biguplus \{V_{\ell} : \ell \mbox{ even}\}$
and $V_{1} = \biguplus \{V_{\ell} : \ell \mbox{ odd}\}$.
Note, each vertex in $V_{0}$ is connected to only vertices from $V_{1}$, and vice versa.

By Lemma \ref{lemma:claw-free}, $\bigwedge^{k} Q_{n}$ is unbalanced, for $k \neq 1,N-1$, 
since $Q_{n}$ contains the claw $K_{1,3}$ as a subgraph whenever $n \ge 3$. 
Note $\wedgep{Q_{n}}{1} = Q_{n}$ is trivially balanced and $\bigwedge^{N-1} Q_{n}$ is balanced 
since $Q_{n}$ is bipartite (see Corollary \ref{cor:unsigned-balanced}).
\end{proof}


\section{Balanced Characterization} \label{section:signed-balanced}

In this section, we characterize those {\em signed} graphs $\Sigma$ for which $\wedgep{\Sigma}{k}$ 
is {\em balanced}; that is, signed graphs $\Sigma$ which satisfy $\wedgep{\Sigma}{k} \sim |\wedgep{\Sigma}{k}|$.
Our main result is Theorem \ref{thm:signed-balanced}, but first we prove several preliminary lemmas. 
The next lemma shows that if $\wedgep{\Sigma}{k}$ is balanced, then $|\Sigma|$ does not contain the 
claw $K_{1,3}$ as a subgraph; this holds if $k$ is strictly smaller than $|V(\Sigma)|-1$.

\begin{lemma} \label{lemma:claw-free}
Let $\Sigma$ be a signed graph on $n$ vertices.
If $K_{1,3}$ is a subgraph of $|\Sigma|$, then $\wedgep{\Sigma}{k}$ is not balanced, for
$2 \le k \le n-2$.
\end{lemma}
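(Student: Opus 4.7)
The plan is to exhibit a $6$-cycle in $\wedgep{\Sigma}{k}$ whose product of edge signs equals $-1$, which immediately forces $\wedgep{\Sigma}{k}$ to be unbalanced. Let $c$ be the center of the claw with leaves $x, y, z$, so that $(c,x), (c,y), (c,z) \in E(\Sigma)$. Because $2 \le k \le n-2$, I may pick a set $S$ of exactly $k-2$ ``spectator'' vertices disjoint from $\{c, x, y, z\}$, and by Fact~\ref{fact:ordering-invariant} I may fix a total ordering on $V(\Sigma)$ in which $c \prec x \prec y \prec z$ and every element of $S$ is placed after $z$.

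Define six $k$-subsets $A_i := \{c, \ell_i\} \cup S$ and $B_i := (\{x,y,z\} \setminus \{\ell_i\}) \cup S$ for $i \in \{1,2,3\}$, where $(\ell_1, \ell_2, \ell_3) = (x, y, z)$. When $i \ne j$, one has $A_i \symdif B_j = \{c, \ell_j\}$, which is a claw edge; when $i = j$, the symmetric difference has size $4$. Hence the induced subgraph on these six vertices in $\wedgep{\Sigma}{k}$ is $K_{3,3}$ minus the matching $\{\{A_i, B_i\}\}_{i=1,2,3}$, which is the $6$-cycle
\[ A_1 \to B_2 \to A_3 \to B_1 \to A_2 \to B_3 \to A_1. \]

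I would next establish the following sign formula for a single edge: for adjacent $k$-subsets $U, V$ with $U \symdif V = \{a, b\}$, the unique permutation in $S_k$ witnessing their adjacency is a cyclic shift of length $\Delta(U,V) + 1$, where $\Delta(U,V)$ counts the common elements of $U$ and $V$ lying strictly between $a$ and $b$ in the total ordering; consequently the sign of this edge in $\wedgep{\Sigma}{k}$ is $(-1)^{\Delta(U,V)}\, \sigma(a,b)$. Since every spectator exceeds $z$, only leaves among $\{x,y,z\}$ can contribute to $\Delta$, and a direct inspection over the six edges yields $\Delta = 1$ on $\{A_1,B_2\}$, $\{B_1,A_2\}$, and $\{B_3,A_1\}$, while $\Delta = 0$ on the other three.

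Finally, each of the three claw edges $(c,\ell_i)$ appears as the label of exactly two edges of the cycle, so the $\sigma$-factors all square to $+1$, and the overall sign around the cycle is $(-1)^3 = -1$. The main obstacle is the sign formula for an individual edge; once the cyclic-shift structure of the connecting permutation is extracted (by writing $\pi$ out explicitly in terms of the positions of $a$ in the sorted $U$ and of $b$ in the sorted $V$), the rest is the short enumeration above. I note that both the choice of ordering and the prescribed signing on the claw edges are immaterial by Facts~\ref{fact:ordering-invariant} and~\ref{fact:switching-class-preserving}, so the existence of a negative $6$-cycle we exhibit suffices.
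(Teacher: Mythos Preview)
Your proof is correct and follows essentially the same route as the paper's: both exhibit the identical negative $6$-cycle on the six $k$-subsets built from two claw vertices plus $k-2$ spectators, under the ordering that puts the claw first. The only cosmetic differences are that the paper first switches the claw to be all-positive and then reads off the edge signs from a figure, whereas you keep the signs general, derive the $(-1)^{\Delta}\sigma(a,b)$ formula, and observe that each $\sigma$-factor appears twice and cancels; also, your remark that the induced subgraph on the six vertices is exactly $K_{3,3}$ minus a matching is not strictly guaranteed (leaves of the claw, or leaves and spectators, could be adjacent in $|\Sigma|$), but this is harmless since all you need is that the listed $6$-cycle exists.
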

\begin{proof}
Let $\Sigma = (G,\sigma)$ be a signed graph where $G = (V,E)$ contains $K_{1,3}$ as a subgraph.
By switching around the center of the claw, we may assume that the subgraph $K_{1,3}$ is all positive. 
Suppose the vertices of the claw is $V_{1} = \{a,b,c,d\}$, where $a$ is the center of the claw, 
and that $V = V_{1} \uplus V_{2}$, where $V_{2}$ is the set of vertices of $G$ that is not part of the claw.
By Fact \ref{fact:ordering-invariant}, we may assume a total ordering on $V$ which starts with the
vertices of $V_{1}$ followed by the vertices of $V_{2}$ (in some arbitrary order):
\begin{equation}
a \prec b \prec c \prec d \prec \ldots
\end{equation}
Consider the cycle on six vertices in $\wedgep{K_{1,3}}{k}$ defined by the following sequence: 
\begin{equation}
a \wedge b \wedge \Omega, 
\ \ 
b \wedge c \wedge \Omega, 
\ \ 
a \wedge c \wedge \Omega, 
\ \ 
c \wedge d \wedge \Omega, 
\ \ 
a \wedge d \wedge \Omega, 
\ \ 
b \wedge d \wedge \Omega,
\end{equation}
where $\Omega$ is the wedge product of some arbitrary $k-2$ elements from $V_{2}$.
Note the above sequence induces a negative $C_{6}$ (see Figure \ref{fig:ext-claw2}).
\end{proof}

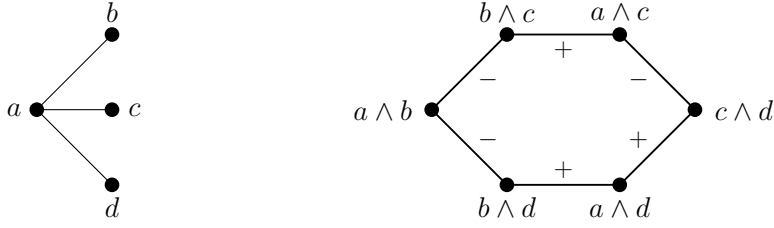
\begin{figure}[t]
\begin{center}
\begin{tikzpicture}

%
\foreach \y in {-1,0,1} {
	\draw (-1,0)--(0,\y);
    \node at (0,\y)[circle, fill=black][scale=0.5]{};
}
\node at (-1,0)[circle, fill=black][scale=0.5]{};

\node at (-1.3,0)[scale=0.9]{$a$};
\node at (0,1.3,0)[scale=0.9]{$b$};
\node at (0.3,0)[scale=0.9]{$c$};
\node at (0,-1.3)[scale=0.9]{$d$};

%
\draw[line width=0.25mm] (4.25,0)--(5.25,1)--(6.75,1)--(7.75,0)--(6.75,-1)--(5.25,-1)--(4.25,0);

\foreach \x in {4.25,7.75}
	\node at (\x,0)[circle, fill=black][scale=0.5]{};
\node at (3.6,0)[scale=0.9]{$a \wedge b$};
\node at (8.4,0)[scale=0.9]{$c \wedge d$};

\foreach \x in {5.25,6.75}
	\foreach \y in {-1,+1}
		\node at (\x,\y)[circle, fill=black][scale=0.5]{};
\node at (5.25,+1.3)[scale=0.9]{$b \wedge c$};
\node at (6.75,+1.3)[scale=0.9]{$a \wedge c$};
\node at (6.75,-1.3)[scale=0.9]{$a \wedge d$};
\node at (5.25,-1.3)[scale=0.9]{$b \wedge d$};

\node at (5,+0.4)[scale=0.8]{$-$};
\node at (5,-0.4)[scale=0.8]{$-$};
\node at (7,+0.4)[scale=0.8]{$-$};
\node at (7,-0.4)[scale=0.8]{$+$};
\node at (6,+0.8)[scale=0.8]{$+$};
\node at (6,-0.8)[scale=0.8]{$+$};
\end{tikzpicture}
\caption{The claw $K_{1,3}$ and its exterior square which is a negative $C_{6}$.
}
\label{fig:ext-claw2}
\end{center}
\hrule
\end{figure}

\vspace{.1in}
Next, we show that the exterior $k$th power of signed cycles are balanced provided 
the cycle is balanced and $k$ is odd or it is unbalanced and $k$ is even.
Note, this implies that the exterior $k$th power of signed paths are always balanced.

\begin{lemma} \label{lemma:signed-cycle}
Let $\Sigma$ be a signed cycle with $n$ vertices and let $k$ be an integer with $1 \le k \le n-1$.
Then, $\wedgep{\Sigma}{k}$ is balanced if and only if 
either
(i) $\Sigma$ is balanced and $k$ is odd; 
or
(ii) $\Sigma$ is unbalanced and $k$ is even.
\end{lemma}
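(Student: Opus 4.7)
The plan is to reduce $\Sigma$ to a canonical form by switching, read off the sign of every edge of $\wedgep{\Sigma}{k}$ explicitly, and then analyze the signs of cycles in terms of how many ``wraparound'' edges they use.

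First I would invoke Fact \ref{fact:switching-class-preserving}, together with the fact that switching does not alter the product of edge signs around a cycle, to replace $\Sigma$ by a switching-equivalent signed cycle on vertices $a_0 \prec a_1 \prec \cdots \prec a_{n-1}$ (the ordering is permitted by Fact \ref{fact:ordering-invariant}) in which the $n-1$ path edges $(a_i,a_{i+1})$, $0 \le i \le n-2$, are all positive and only the wraparound edge $(a_0,a_{n-1})$ carries a possibly negative sign $\epsilon$, with $\epsilon = +1$ exactly when $\Sigma$ is balanced.

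Next, I would determine the sign of each edge of $\wedgep{\Sigma}{k}$ essentially as in the proof of Fact \ref{fact:cycle-exterior}. For $u,v \in \binom{V}{k}$ with $u \symdif v = \{a_i,a_{i+1}\}$ and $i \le n-2$, the connecting permutation is the identity, so the edge carries sign $+1$. For $u \symdif v = \{a_0,a_{n-1}\}$, writing $u = (a_0,t_1,\ldots,t_{k-1})$ and $v = (t_1,\ldots,t_{k-1},a_{n-1})$ in sorted order shows that the connecting permutation is the $k$-cycle sending $1 \to 2 \to \cdots \to k \to 1$, of sign $(-1)^{k-1}$; the edge therefore has sign $(-1)^{k-1}\epsilon$. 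Consequently, every cycle $\gamma$ in $\wedgep{\Sigma}{k}$ has sign $\bigl((-1)^{k-1}\epsilon\bigr)^m$, where $m$ counts the wraparound-type edges traversed by $\gamma$.

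The ``if'' direction is now immediate: when $\epsilon = (-1)^{k-1}$---which is exactly the disjunction of the two stated cases, $\Sigma$ balanced with $k$ odd and $\Sigma$ unbalanced with $k$ even---every edge of $\wedgep{\Sigma}{k}$ is positive, hence $\wedgep{\Sigma}{k}$ is balanced. For the ``only if'' direction I must show that in the complementary case $\epsilon = (-1)^k$, where wraparound-type edges carry sign $-1$ and all others $+1$, some cycle has $m$ odd. The main obstacle is exhibiting such a cycle explicitly: a reduction through Fact \ref{fact:cycle-exterior} handles only the subcase $\Sigma$ balanced with $k$ even, leaving the subcase $\Sigma$ unbalanced with $k$ odd to be treated by hand.

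To construct a cycle with $m = 1$, I would take $S = \{a_0,a_1,\ldots,a_{k-1}\}$ and $S' = \{a_1,\ldots,a_{k-1},a_{n-1}\}$; since $S \symdif S' = \{a_0,a_{n-1}\}$, these are joined by a single wraparound edge. A length-$(n-1)$ path from $S$ to $S'$ using only non-wrap edges can be built in two phases. Phase A slides the rightmost entry rightwards through $\{a_0,\ldots,a_{k-2},a_j\}$ for $j = k-1,k,\ldots,n-1$; Phase B, starting from $\{a_0,\ldots,a_{k-2},a_{n-1}\}$, cascades the remaining elements by moving $a_{k-2}\to a_{k-1}$, then $a_{k-3}\to a_{k-2}$, and so on down to $a_0\to a_1$, terminating at $S'$. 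A direct inspection confirms that at each step the destination is adjacent in the path $a_0, a_1, \ldots, a_{n-1}$ to the moved element and is absent from the current set (Phase A clears $a_{k-1}$, and each step of Phase B clears exactly the slot needed by the next). Closing this path with the wrap edge $S \leftrightarrow S'$ yields a simple cycle of length $n$ with $m = 1$, hence of sign $-1$, so $\wedgep{\Sigma}{k}$ is unbalanced.
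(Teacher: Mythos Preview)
Your argument is correct and follows the same route as the paper: reduce $\Sigma$ by switching to a canonical signed cycle with at most one negative edge at the wraparound position, fix the natural ordering, and compute that every edge of $\wedgep{\Sigma}{k}$ has sign $+1$ except possibly the wraparound-type edges, whose sign is $(-1)^{k-1}\epsilon$. The paper stops after this edge-sign computation and simply asserts the biconditional; you go further and, for the ``only if'' direction, explicitly build a cycle in $\wedgep{\Sigma}{k}$ using exactly one wraparound edge (your Phase~A / Phase~B construction from $S=\{a_0,\dots,a_{k-1}\}$ to $S'=\{a_1,\dots,a_{k-1},a_{n-1}\}$). That construction is sound and supplies the witness that the paper's proof leaves implicit, so your version is in fact slightly more complete than the original while taking the same approach.
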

\begin{proof}
Let $\Sigma = (C_{n},\sigma)$ be a signed cycle. 
If $\Sigma$ is unbalanced, then it has an odd number of negative edges. 
Since two adjacent negative edges can be switched to positive edges
and switching may move any negative edge along the cycle,
we may assume that there is exactly one negative edge on $\Sigma$.
If $\Sigma$ is balanced, we may assume (by switching) that 
it has no negative edges.
So, $\Sigma$ either has no (if balanced) or exactly one negative edge (if unbalanced).

We assume without loss of generality that the vertices of $C_{n}$ is
$V = \{u_{0},u_{1},\ldots,u_{n-1}\}$ where $u_{i}$ is adjacent to $u_{j}$ 
if $i-j \equiv \pm 1\pmod{n}$ and so that
$(u_{0},u_{n-1})$ is the unique negative edge if $\Sigma$ is not balanced.
By Fact \ref{fact:ordering-invariant}, we may assume that the ordering 
on $V$ is $u_{0} \prec u_{1} \prec \ldots \prec u_{n-1}$.

Consider an arbitrary cycle $\Gamma$ in $\wedgep{\Sigma}{k}$ given by 
the sequence $w_{1},w_{2},\ldots,w_{\ell} \in \binom{V}{k}$.
The permutation that connects $w_{i}$ and $w_{i+1}$, which we will denote
$\pi_{w_{i},w_{i+1}}$, is a $k$-cycle if $w_{i} \symdif w_{i+1} = \{u_{0},u_{n-1}\}$, 
and is the identity permutation otherwise.
Let $\sigma'$ denote the edge signing function on $\wedgep{\Sigma}{k}$.
Thus, the sign of the edge $(w_{i},w_{i+1})$ is given by
\begin{eqnarray}
\sigma'(w_{i},w_{i+1})
	& = & \sigma(w_{i} \symdif w_{i+1}) \sgn(\pi_{w_{i},w_{i+1}}) \\
	& = &
		\left\{\begin{array}{ll}
		(-1)^{\iverson{\mbox{\scriptsize $\Sigma$ unbalanced}}} \sgn(\mbox{\footnotesize $k$-cycle}) & \mbox{ if $w_{i} \symdif w_{i+1} = \{u_{0},w_{n-1}\}$ } \\
		(+1) \sgn(\mbox{\footnotesize id}) & \mbox{ if $w_{i} \symdif w_{i+1} \neq \{u_{0},w_{n-1}\}$ }
		\end{array}\right.
\end{eqnarray}
Since $\sgn(\mbox{\small $k$-cycle}) = (-1)^{k-1}$, the sign of each edge $(w_{i},w_{i+1})$ of $\Gamma$ 
is positive if $\Sigma$ is unbalanced and $k$ is even or if $\Sigma$ is balanced and $k$ is odd.
Thus, each cycle $\Gamma$ in $\wedgep{\Sigma}{k}$ is a positive 
if and only if 
$\Sigma$ is unbalanced and $k$ is even or $\Sigma$ is balanced and $k$ is odd.
\end{proof}

\vspace{.1in}

\begin{lemma} \label{lemma:signed-path}
Let $\Sigma$ be a signed path on $n$ vertices.
Then, $\wedgep{\Sigma}{k}$ is balanced for each integer $k$ with $1 \le k \le n-1$.
\end{lemma}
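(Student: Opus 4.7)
The plan is to reduce this to the unsigned case already handled in Fact \ref{fact:path-exterior}, using the switching-equivalence invariance of Fact \ref{fact:switching-class-preserving}. The key observation is that a signed path is trivially balanced: since a tree contains no cycles at all, the defining condition ``every cycle is positive'' is vacuously satisfied. Hence $\Sigma \sim |\Sigma|$, where $|\Sigma| = P_n$ is the underlying unsigned path.

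First I would invoke Fact \ref{fact:switching-class-preserving} to conclude that $\wedgep{\Sigma}{k} \sim \wedgep{P_{n}}{k}$. Next, Fact \ref{fact:path-exterior} tells us that $\wedgep{P_{n}}{k}$ is balanced for every $k$ in the range $1 \le k \le n-1$. Finally, I would note that balancedness is a property of the switching equivalence class: if $\Sigma_1 \sim \Sigma_2$ and $\Sigma_2 \sim |\Sigma_2|$, then conjugating appropriately also gives $\Sigma_1 \sim |\Sigma_1|$ (since switching does not alter the underlying graph or the signs of cycles). Putting these together yields that $\wedgep{\Sigma}{k}$ is balanced.

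There is essentially no obstacle here; the lemma is a formal consequence of the cycle-free structure of paths together with the two earlier facts. The only point to verify cleanly is that the equivalence $\Sigma \sim |\Sigma|$ really does hold for an arbitrary signing of a path, which follows because one can explicitly produce the diagonal $\pm 1$ matrix $D$ effecting the switching by walking along the path and flipping signs greedily, but this can simply be quoted from the general fact that every signed forest is balanced.
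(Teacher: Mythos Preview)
Your proof is correct and follows essentially the same approach as the paper: both reduce to the unsigned case by observing that a signed path, having no cycles, is trivially balanced and hence switching-equivalent to $P_n$. The only cosmetic difference is that you cite Fact~\ref{fact:switching-class-preserving} and Fact~\ref{fact:path-exterior} explicitly, whereas the paper absorbs the switching step into ``we may assume all edges are positive'' and then re-derives the path case inline by noting that every connecting permutation is the identity.
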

\begin{proof}
Note that $\Sigma$ is balanced since $|\Sigma|$ is a path so we may assume that all edges
are positive. This already proves the case for $k=1$, since $\wedgep{\Sigma}{1} = \Sigma$,
and for $k=n-1$, since $|\wedgep{\Sigma}{n-1}| \cong |\Sigma|$.
For the cases $2 \le k \le n-2$, we appeal to the proof of Lemma \ref{lemma:signed-cycle}.
In an arbitrary cycle $\Gamma$ in $\wedgep{\Sigma}{k}$, the permutation connecting
adjacent vertices in $\Gamma$ is always the identity permutation. That is, we never
encounter the case where the symmetric difference between adjacent vertices
is the two endpoints of the path $\Sigma$.
This shows that any such cycle $\Gamma$ in $\wedgep{\Sigma}{k}$ is always positive.
\end{proof}

\vspace{.1in}
Next, we characterize the $(n-1)$th exterior power of $n$-vertex signed graphs.
In this case, we show that only balanced signed bipartite graphs have a balanced 
$(n-1)$th exterior power.

\begin{lemma} \label{lemma:signed-bipartite}
Let $\Sigma$ be a signed graph on $n$ vertices.
The exterior power $\wedgep{\Sigma}{n-1}$ is balanced if and only if 
each odd cycle in $\Sigma$ is negative and each even cycle in $\Sigma$ is positive.
\end{lemma}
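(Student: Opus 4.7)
The plan is to leverage Fact \ref{fact:palindrome-isomorphic} with $k=n-1$: the complementation map $\tau(v) = V(\Sigma) \setminus \{v\}$ is a graph isomorphism from $|\Sigma|$ to $|\wedgep{\Sigma}{n-1}|$. After transporting the signing of $\wedgep{\Sigma}{n-1}$ back to $|\Sigma|$ via $\tau$, the question reduces to computing, for each edge $(v_i, v_j) \in E(\Sigma)$, the sign that this edge inherits under $\tau$, and then checking when the resulting signing on $|\Sigma|$ is switching-equivalent to $\sigma$ (or, more usefully here, to $-\sigma$).

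To compute the edge sign, I fix the ordering $v_1 \prec \cdots \prec v_n$ on $V(\Sigma)$ (using Fact \ref{fact:ordering-invariant} to justify this choice). For $i < j$, write
\[
\tau(v_i) = (v_1, \ldots, v_{i-1}, v_{i+1}, \ldots, v_n), \quad
\tau(v_j) = (v_1, \ldots, v_{j-1}, v_{j+1}, \ldots, v_n),
\]
each listed in ascending order. A position-by-position bookkeeping check — matching which index of $\tau(v_i)$ must map to which position of $\tau(v_j)$ — reveals that the unique connecting permutation $\pi_{ij}$ satisfying Equation (\ref{eqn:wedge-adjacency}) is the cyclic permutation $(i,\, j-1,\, j-2,\, \ldots,\, i+1)$ of length $j-i$, fixing all other positions, with the edge pair $(u_{\pi_{ij}(i)}, w_i) = (v_j, v_i)$ supplying the adjacency. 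Hence $\sgn(\pi_{ij}) = (-1)^{j-i-1}$, and the sign of the corresponding edge in $\wedgep{\Sigma}{n-1}$ is $(-1)^{j-i-1}\sigma(v_i,v_j)$.

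Since $j - i$ and $j + i$ have the same parity, $(-1)^{j-i-1} = -(-1)^{i+j} = -(-1)^i (-1)^j$. Defining the diagonal switching $D$ with $D_{\tau(v_i),\tau(v_i)} = (-1)^i$, switching $\wedgep{\Sigma}{n-1}$ by $D$ multiplies each edge sign by $(-1)^i(-1)^j$, producing exactly the signing $-\sigma$ on $|\Sigma|$. Thus $\wedgep{\Sigma}{n-1} \sim -\Sigma$, so $\wedgep{\Sigma}{n-1}$ is balanced if and only if $-\Sigma$ is balanced, which is in turn equivalent to every odd cycle of $\Sigma$ being negative and every even cycle of $\Sigma$ being positive — exactly the claim. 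An alternative finish that avoids introducing $D$ is to compute the sign of an arbitrary cycle $v_{i_1},\ldots,v_{i_m}$ of $\Sigma$ directly as $\prod_a (-1)^{|i_a-i_{a+1}|-1}\sigma(v_{i_a},v_{i_{a+1}})$; the parity identity $|i-j| \equiv i+j \pmod 2$ together with $\sum_a(i_a+i_{a+1}) = 2\sum_a i_a$ collapses this to $(-1)^m \sigma(\text{cycle})$, which is $+1$ precisely under the stated condition.

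The main obstacle is the permutation-sign bookkeeping in the middle step: one must verify carefully that the index shifts from deleting $v_i$ versus deleting $v_j$ force the connecting permutation to be a single cycle rather than a more complicated product, and that its length is exactly $j-i$. Once this computation is nailed down, the switching identity $\wedgep{\Sigma}{n-1} \sim -\Sigma$ and the conclusion are essentially immediate.
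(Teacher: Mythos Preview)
Your proof is correct, and it takes a somewhat different route than the paper's. Both arguments rest on the complementation bijection between $V$ and $\binom{V}{n-1}$, but the paper works cycle-by-cycle: for each cycle $\Gamma$ in $\wedgep{\Sigma}{n-1}$ it chooses an ordering on $V$ adapted to the corresponding cycle $\theta(\Gamma)$ in $\Sigma$ (and switches $\Sigma$ so that $\theta(\Gamma)$ has at most one negative edge), thereby making all but one connecting permutation the identity and the last one an $(\ell-1)$-cycle. You instead fix a single global ordering, compute the connecting permutation for \emph{every} edge at once as the $(j-i)$-cycle $(i,\,j{-}1,\,\ldots,\,i{+}1)$, and then exhibit an explicit diagonal switching $D$ with $D_{\tau(v_i),\tau(v_i)}=(-1)^i$ that turns the induced signing into $-\sigma$. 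Your approach yields the sharper intermediate statement that $\wedgep{\Sigma}{n-1}$ is switching-isomorphic to $-\Sigma$, from which the lemma follows immediately via the definition of anti-balance; the paper's per-cycle argument establishes the balance criterion directly without ever naming this global switching equivalence. The permutation bookkeeping you flag as the main obstacle is indeed the crux, and your computation of $\pi_{ij}$ is correct.
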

\begin{proof}
Let $\Sigma = (G,\sigma)$ be a signed graph where $G=(V,E)$ and $|V|=n$.
Suppose $\theta: \binom{V}{n-1} \rightarrow V$ is the natural bijection between $V$
and its $(n-1)$-subsets defined as follows.
For a subset $A \subseteq V$ with $|A| = n-1$, we let
$\theta(A) = a$ where $V \setminus \{a\} = A$.

Consider an arbitrary cycle $\Gamma$ of length $\ell$ in $\wedgep{\Sigma}{n-1}$ given by the sequence
$w_{0},w_{1},\ldots,w_{\ell-1} \in \binom{V}{n-1}$.
We identify $\Gamma$ with another cycle $\theta(\Gamma)$ in $\Sigma$ 
defined by the sequence 
\begin{equation}
\theta(w_{0}), \theta(w_{1}), \ldots, \theta(w_{\ell-1}) \in V.
\end{equation}
By Fact \ref{fact:ordering-invariant}, we may assume an ordering on $V$ which
starts with vertices in $\theta(\Gamma)$ followed by the other vertices (in some arbitrary order):
\begin{equation} \label{eqn:bipartite-ordering}
\theta(w_{0}) \prec \theta(w_{1}) \prec \ldots \prec \theta(w_{\ell-1}) \prec \ldots
\end{equation}
By switching, we may assume that $\theta(\Gamma)$ has zero or exactly one negative edge,
depending on whether $\theta(\Gamma)$ is a positive or negative cycle. 
Moreover, without loss of generality, we may assume that that the negative edge, if it exists, 
be the edge connecting $\theta(w_{0})$ and $\theta(w_{\ell-1})$.
That is,
\begin{eqnarray}
\sigma(\theta(w_{i}), \theta(w_{i+1}))
	& = & +1,		\ \ \ \mbox{ for $i = 0,\ldots,\ell-2$ } \\
\sigma(\theta(w_{0}), \theta(w_{\ell-1})) 	
	& = & 
		\left\{\begin{array}{ll}
		-1 & \mbox{ if $\theta(\Gamma)$ is a negative cycle} \\
		+1 & \mbox{ otherwise }
		\end{array}\right.
\end{eqnarray}
We use $\sigma'(w_{i},w_{i+1})$ to denote the sign of edge $(w_{i},w_{i+1})$ in $\wedgep{\Sigma}{n-1}$.
Note that 
\begin{equation}
\sigma'(w_{i},w_{i+1}) = \sigma(\theta(w_{i}), \theta(w_{i+1})).
\end{equation}
Let $\pi_{w_{i},w_{i+1}}$ denote the permutation that connects the vertices
$w_{i}$ and $w_{i+1}$ in $\wedgep{\Sigma}{n-1}$.
By our choice of the ordering given above,
the identity permutation connects $(w_{i},w_{i+1})$, for $0 \le i \le \ell-2$,
and an $(\ell-1)$-cycle connects $(w_{0},w_{\ell-1})$.
Therefore, the sign of the cycle $\Gamma$ is given by
\begin{eqnarray} \label{eqn:cycle-sign}
\sigma'(\Gamma) 
	& = & \sigma'(w_{0}, w_{\ell-1}) \sgn(\pi_{w_{0},w_{\ell-1}})
		\prod_{i=0}^{\ell-2} \sigma'(w_{i}, w_{i+1}) \sgn(\pi_{w_{i},w_{i+1}}) \\
	& = & \sigma(\theta(w_{0}), \theta(w_{\ell-1})) \sgn(\pi_{w_{0},w_{\ell-1}})
		\prod_{i=0}^{\ell-2} \sigma(\theta(w_{i}), \theta(w_{i+1})) \sgn(\pi_{w_{i},w_{i+1}}) \\
	& = & (-1)^{\iverson{\mbox{\scriptsize $\theta(\Gamma)$ negative}}} \sgn(\mbox{\small $(\ell-1)$-cycle}).
\end{eqnarray}
The last expression equals $+1$ if and only if 
each odd cycle in $\Sigma$ is negative and each even cycle in $\Sigma$ is positive.
This proves the claim.
\end{proof}


\vspace{.1in}
Finally, we are ready to put the pieces together and prove our main characterization 
of signed graphs $\Sigma$ whose exterior power $\wedgep{\Sigma}{k}$ is balanced.
\vspace{.1in}

\ignore{
\begin{theorem} \label{thm:signed-balanced}
Let $\Sigma$ be a signed graph on $n$ vertices. Then, $\wedgep{\Sigma}{k}$ is balanced if and only if:
\begin{itemize}
\item for $k = 1,\ldots,n-2$:\\
	Either (i) $|\Sigma|$ is a path; or
	(ii) $|\Sigma|$ is a cycle, where $\Sigma$ is balanced and $k$ is odd 
	or $\Sigma$ is unbalanced and $k$ is even.
\item for $k=n-1$:\\
	Each even cycle in $\Sigma$ is positive and each odd cycle in $\Sigma$ is negative.
\end{itemize}
\end{theorem}
}
\begin{proof} (of Theorem \ref{thm:signed-balanced}) \\
The first part follows from Lemmas \ref{lemma:claw-free}, \ref{lemma:signed-path}, and \ref{lemma:signed-cycle}.
The second part is a restatement of Lemma \ref{lemma:signed-bipartite}.
\end{proof}


\vspace{.1in}
The next corollary of Theorem \ref{thm:signed-balanced} yields a similar characterization 
for {\em unsigned} graphs.

\begin{corollary} \label{cor:unsigned-balanced}
Let $G$ be a graph on $n$ vertices.
Then, $\wedgep{G}{k}$ is balanced if and only if:
\begin{itemize}
\item for $k=1,\ldots,n-2$: $G$ is a path or $G$ is a cycle and $k$ is odd.
\item for $k=n-1$: $G$ is bipartite.
\end{itemize}
\end{corollary}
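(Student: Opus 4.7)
The plan is to specialize Theorem~\ref{thm:signed-balanced} to the all-positive signing, i.e., to regard the unsigned graph $G$ as the signed graph $\Sigma=(G,\sigma_{+})$ with $\sigma_{+}\equiv+1$. The single driving observation is that every cycle of $\Sigma$ is a product of $+1$'s and hence positive; in particular $\Sigma$ is balanced and is \emph{never} unbalanced. With this in hand the corollary is a line-by-line specialization of the two bullets of the theorem.

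First I would handle the middle range $2\le k\le n-2$. By Theorem~\ref{thm:signed-balanced}, $\wedgep{G}{k}$ is balanced iff either (i) $\Sigma$ is a signed path, or (ii) $\Sigma$ is a signed cycle that is balanced for odd $k$ or unbalanced for even $k$. Alternative (i) is simply ``$G$ is a path.'' In alternative (ii), the ``unbalanced/$k$ even'' sub-case is excluded by the observation above, leaving only ``$\Sigma$ balanced and $k$ odd,'' which is the condition ``$G$ is a cycle and $k$ is odd.'' Taking the disjunction recovers the corollary's first bullet. The boundary value $k=1$, when applicable, is absorbed into the statement by $\wedgep{G}{1}=G$, which carries the all-positive signing and so is trivially balanced.

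Next I would handle the boundary $k=n-1$. Here Theorem~\ref{thm:signed-balanced} requires every even cycle of $\Sigma$ to be positive and every odd cycle of $\Sigma$ to be negative. The first requirement is automatic under $\sigma_{+}$. The second requirement can be met only vacuously, since every cycle of $\Sigma$ is already positive; this forces $\Sigma$ to contain no odd cycle at all, i.e., $G$ must be bipartite. Conversely, if $G$ is bipartite then the odd-cycle clause is vacuously satisfied and the even-cycle clause is immediate, so $\wedgep{G}{n-1}$ is balanced.

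There is really no hard step here; the only thing to keep straight is the logical interaction between ``all cycles positive'' (which the all-positive signing forces on $\Sigma$) and the parity/balance conditions demanded by the theorem, so that each ``iff'' in the corollary transfers from the signed version without change. No new combinatorial or algebraic machinery beyond Theorem~\ref{thm:signed-balanced} is needed.
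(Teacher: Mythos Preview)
Your proposal is correct and is exactly the approach the paper intends: the corollary is stated without proof as an immediate specialization of Theorem~\ref{thm:signed-balanced} to the all-positive signing, and your argument carries out that specialization explicitly. The one loose end is your remark on $k=1$; note that Theorem~\ref{thm:signed-balanced} itself only covers $k=2,\ldots,n-2$, and since $\wedgep{G}{1}=G$ is balanced for \emph{every} $G$, the $k=1$ case does not literally fit the stated equivalence---this is a wrinkle in the corollary's indexing rather than in your method.
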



\section*{Acknowledgments}

The research was supported in part by the National Science Foundation grant DMS-1004531
and also by the National Security Agency grant H98230-11-1-0206.
We thank Chris Godsil for helpful comments on the symmetric powers of graphs.




\end{document}
